\documentclass[11pt]{article}

\usepackage{amsmath,amsthm,verbatim,amssymb,amsfonts,amscd, graphicx}
\usepackage{hyperref}
\usepackage{graphics}
\usepackage{esint}
\usepackage{bm}
\usepackage{color}

\topmargin0.0cm
\headheight0.0cm
\headsep0.0cm
\oddsidemargin0.0cm
\textheight23.0cm
\textwidth16.5cm
\footskip1.0cm
\theoremstyle{plain}
\newtheorem{theorem}{Theorem}

\newtheorem{lemma}{Lemma}

\theoremstyle{definition}

\newcommand{\R}{\mathbb{R}}

\newcommand{\B}{\mathbb{B}}
\newcommand{\Bn}{\mathbb{B}^n}

\newcommand{\Sn}{\mathbb{S}^{n-1}}

\newcommand{\diam}{\text{diam}}

\newcommand{\barint}{\mathop{\hbox{\vrule height3pt depth-2.7pt width.65em}\hskip-1em \int}}

\newcommand{\red}{\color{red}}

\newcommand{\Mod}{\textnormal{Mod}}

\numberwithin{definition}{section}
\numberwithin{equation}{section}
\numberwithin{theorem}{section}
\numberwithin{lemma}{section}

 \begin{document}
 
 \author{Sita Benedict, Pekka Koskela and Xining Li}
  \title{Weighted Hardy Spaces of Quasiconformal Mappings}
  \date{}
\maketitle

\begin{abstract}
We establish a weighted version of the  $H^p$-theory of
quasiconformal mappings. 
\end{abstract}

\section{Introduction}
 Let $f:\Bn\rightarrow\R^n$ be a quasiconformal mapping, see Section 2.2 for the definition. Analogously to the setting of analytic functions defined in the unit disk, we say that $f$ belongs to Hardy space $H^p,0<p<\infty$, provided that
$$(*)\sup_{0<r<1}\int_{\Sn}|f(r\omega)|^pd\sigma(\omega)<\infty.$$
The theory of quasiconformal Hardy spaces was initiated in \cite{Z}. 
According to Beurling's theorem, for a given quasiconformal mapping $f$, 
the radial limit
 $$f(\omega)=\lim_{r\rightarrow 1}f(r\omega)$$
exists for almost every $\omega \in \Sn.$ Zinsmeister used this result in 
\cite{Z} to charcterize membership in $H^p$ via $L^p$-integrability of 
radial limits and via $L^p$-integrability of a nontangential maximal function. 
For further results on quasiconformal $H^p$-spaces, we refer the reader to 
\cite{AK}.
 
 The theory of quasiconformal $H^p$-spaces is a generalization of the theory 
of $H^p$-spaces of univalent functions. In the latter setting, one may employ 
the powerful machinery of analytic functions. Especially, in \cite{PR} this 
machinery was utilized towards a weighted theory of $H^p$-spaces of univalent 
functions. Let us define $M(r,f)=\sup_{\omega\in \Sn}|f(r\omega)|$ for $0<r<1$. 
Then $f$ belongs to $H^p$ if and only if $$\int_0^1 M(r,f)^pdr<\infty.$$ 
In  \cite{PR},  the weighted Hardy space  for $-1<\alpha<\infty$ and $0<p<\infty$ was defined as the class of all univalent functions for 
which
 $$(**)\int_0^1 M(r,f)^p(1-r)^\alpha<\infty.$$
 Notice that $$\sup_{0<r<1}(1-r)^\alpha \int_{\Sn}|f(r\omega)|^pd\sigma(\omega)=\infty$$
 for any univalent function when $\alpha<0$ and that 
$\lim_{r\rightarrow 1}(1-r)^{\alpha}|f(r\omega)|=\infty$ for almost every $\omega$ when $\alpha<0$. Consequently, one cannot give a simple definition for weighted Hardy spaces based on a variant of $(*)$ or on weighted radial limits. Several equaivalent characterizations for membership in weighted Hardy spaces were given 
in \cite{GPaPe} and \cite{PR}.

Given $0<p<\infty,-1<\alpha<\infty$ and a quasiconformal mapping $f:\B\rightarrow\R^n$ we write $f\in H^p_{\alpha}$ whenever
\begin{eqnarray}\label{p-maximal-modulus-estimate}
\int_0^1 (1-r)^{n-2+\alpha} M^p(r,f)dr < \infty.
\end{eqnarray}
We establish the following characterization of membership in $H^p_{\alpha}$.
\begin{theorem}\label{for all p}
Let $f: \B^n \rightarrow \R^n$ be a quasiconformal mapping, $0 < p < \infty$, and $-1 < \alpha < \infty$. Then the following are equivalent:
\begin{eqnarray}\label{p-weighted-hp}
||f||^p_{ H^p_{\alpha}}:=\int_0^1 (1-r)^{n-2+\alpha} M^p(r,f)dr < \infty.
\end{eqnarray}
\begin{eqnarray}\label{p-image-area-estimate}
\int_0^1 (1-r)^{n-2+\alpha}\int_{B(0,r)}|f(x)|^{p-n}|Df(x)|^n dxdr < \infty
\end{eqnarray}
\begin{eqnarray}\label{p-area-estimate}
\int_0^1 (1-r)^{n-2+\alpha}\left(\int_{B(0,r)}|Df(x)|^n dx\right)^{p/n}dr < \infty
\end{eqnarray}

 If $\alpha\ge 0$ or $p\ge n$, the above conditions are further equivalent to
\begin{eqnarray}\label{p-averaged-gradient-estimate}
\int_{\B^n}a_f^p(x)(1-|x|)^{p-1+\alpha} dx < \infty.
\end{eqnarray}
\end{theorem}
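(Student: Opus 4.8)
The plan is to prove $(\ref{p-weighted-hp})\Leftrightarrow(\ref{p-area-estimate})$ and $(\ref{p-weighted-hp})\Leftrightarrow(\ref{p-image-area-estimate})$ for all admissible $p,\alpha$, and then, under the extra hypothesis, to add $(\ref{p-averaged-gradient-estimate})$ to the list. We may assume $f(0)=0$: translating the target changes each of the four quantities only by a bounded amount when $f$ is unbounded, while if $f$ is bounded all four are automatically finite because $n-2+\alpha>-1$. Write $d(x)=1-|x|$, $M(r)=M(r,f)$, $m(r)=\inf_{|x|=r}|f(x)|$. Four standard facts about a $K$-quasiconformal $f$ are used throughout. (i) $|Df|^n\asymp J_f$ a.e.\ and $f$ obeys the area formula, so $\int_E|Df|^n\asymp|f(E)|$ and $\int_E(g\circ f)|Df|^n\asymp\int_{f(E)}g$ for Borel $E$ and $g\ge0$. (ii) $\sup_{\overline{B(0,r)}}|f|=M(r)$, whence $B(0,m(r))\subset f(B(0,r))\subset B(0,M(r))$. (iii) By the quasiconformal distortion theorem $f$ is uniformly quasisymmetric at the Whitney scale, so $f(B(x,d(x)/2))$ is a $C(n,K)$-quasiball comparable to a round ball; combined with the definition of $a_f$ this gives $a_f(x)\,d(x)\asymp\big(\int_{B(x,d(x)/2)}|Df|^n\big)^{1/n}\asymp|f(B(x,d(x)/2))|^{1/n}\asymp\operatorname{diam}f(B(x,d(x)/2))$. (iv) An elementary chaining: if $|x^\ast|=r$, $|f(x^\ast)|=M(r)$, then covering $[0,x^\ast]$ by $O\!\left(\log\tfrac1{1-r}\right)$ consecutively overlapping, boundedly overlapping Whitney balls $B_i$ (radii $\asymp(3/4)^i$, centres on $[0,x^\ast]$) and passing to images gives $M(r)=|f(x^\ast)-f(0)|\le\operatorname{diam}\big(\textstyle\bigcup_i\overline{f(B_i)}\big)\le\sum_i\operatorname{diam}f(B_i)$.

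The forward implications are relatively straightforward. $(\ref{p-weighted-hp})\Rightarrow(\ref{p-area-estimate})$: by (i)--(ii), $\big(\int_{B(0,r)}|Df|^n\big)^{p/n}\asymp|f(B(0,r))|^{p/n}\le C\,M(r)^p$; multiply by $(1-r)^{n-2+\alpha}$ and integrate. $(\ref{p-weighted-hp})\Rightarrow(\ref{p-image-area-estimate})$: by (i) and the change of variables, $\int_{B(0,r)}|f|^{p-n}|Df|^n\asymp\int_{f(B(0,r))}|y|^{p-n}\,dy\le\int_{B(0,M(r))}|y|^{p-n}\,dy=c_n\,M(r)^p$, the right-hand integral being finite since $p>0$ (also for $p<n$); then integrate. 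In particular $(\ref{p-area-estimate})$ and $(\ref{p-image-area-estimate})$ coincide, up to constants, with $\int_0^1(1-r)^{n-2+\alpha}|f(B(0,r))|^{p/n}\,dr$ and $\int_0^1(1-r)^{n-2+\alpha}\!\int_{f(B(0,r))}|y|^{p-n}\,dy\,dr$.

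The reverse implications are where the geometry does the work, and I expect these to be the main obstacle. From (iv) and (iii), $M(r)\lesssim\sum_i|f(B_i)|^{1/n}$; applying H\"older over the $\asymp\log\tfrac1{1-r}$ summands together with the bounded overlap of the $B_i$ (all contained in $B(0,r')$, $r'=1-\tfrac14(1-r)$) yields $M(r)^n\lesssim\big(\log\tfrac1{1-r}\big)^{n-1}|f(B(0,r'))|$; raising to $p/n$, weighting by $(1-r)^{n-2+\alpha}$, integrating, substituting $r\mapsto r'$, and absorbing the logarithm (harmless because $n-2+\alpha>-1$) gives $(\ref{p-area-estimate})\Rightarrow(\ref{p-weighted-hp})$ — running (iv) with a scale-by-scale supremum over directions instead of a single segment dispenses with the logarithm altogether. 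For $(\ref{p-image-area-estimate})\Rightarrow(\ref{p-weighted-hp})$ one proceeds the same way but estimates each $|f(B_i)|^{1/n}$ using (iii) (namely $f(B_i)\supset B(f(x_i),c\operatorname{diam}f(B_i))$) and then, after raising to $p$ and integrating, dominates the result by $\int_{f(B(0,r'))}|y|^{p-n}\,dy$, invoking (ii) — in particular $m(r')^p\le c_n^{-1}\int_{f(B(0,r'))}|y|^{p-n}\,dy$ because $B(0,m(r'))\subset f(B(0,r'))$ — and a layer-cake decomposition of that integral.

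Finally assume $\alpha\ge0$ or $p\ge n$. By (iii), $(\ref{p-averaged-gradient-estimate})$ is comparable to $\int_{\Bn}\big(\operatorname{diam}f(B(x,d(x)/2))\big)^p(1-|x|)^{\alpha-1}\,dx$. Decompose $\Bn$ into the Whitney annuli $A_j=\{\,2^{-j-1}\le d(x)<2^{-j}\,\}$ and each $A_j$ into a boundedly overlapping net of Whitney balls; since the images of the net balls cover, with bounded overlap, $f$ of the $j$-th shell, their volumes sum to $\lesssim|f(B(0,1-2^{-j-2}))|-|f(B(0,1-2^{-j+2}))|$, and by (i)--(iii) both $(\ref{p-averaged-gradient-estimate})$ and the reformulated $(\ref{p-area-estimate})$ become weighted sums over $j$ of the image volumes $|f(B(0,1-2^{-j}))|$. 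When $p\ge n$ they match directly: on $A_j$ one extracts a maximal factor, $\big(\operatorname{diam}f(B(x,d/2))\big)^p\le M_j^{\,p-n}\big(\operatorname{diam}f(B(x,d/2))\big)^n$ with $M_j\asymp M(1-2^{-j})$, which supplies exactly the required slack. When $\alpha\ge0$ and $p<n$ this pointwise extraction is unavailable; instead one exploits that the exponent $n-2+\alpha\ge0$ renders the geometric series in the annulus index convergent, so that the telescoped image volumes can be resummed — a weighted-maximal-function type estimate which is the delicate point of this half. Together with $(\ref{p-averaged-gradient-estimate})\Rightarrow(\ref{p-area-estimate})$, proved along the same lines (or via (iv): $M(r)\lesssim\sum_i a_f(x_i)\,d(x_i)$, then integrate), this completes the equivalence.
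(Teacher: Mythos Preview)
Your forward implications $(\ref{p-weighted-hp})\Rightarrow(\ref{p-area-estimate})$ and $(\ref{p-weighted-hp})\Rightarrow(\ref{p-image-area-estimate})$ are fine and match the paper. The gap is in the reverse direction. Your chaining argument (iv) gives
\[
M(r)^p\ \lesssim\ \Big(\log\tfrac{1}{1-r}\Big)^{(n-1)p/n}\,|f(B(0,r'))|^{p/n},
\]
and you then claim the logarithm is ``harmless because $n-2+\alpha>-1$''. It is not: for an increasing function $g$ one can have $\int_0^1(1-r)^{\beta}g(r)\,dr<\infty$ while $\int_0^1(1-r)^{\beta}(\log\tfrac{1}{1-r})^{\gamma}g(r)\,dr=\infty$ (take $g$ piecewise constant with $g\sim 2^k/k^2$ on the $k$-th dyadic annulus, $\beta=0$, $\gamma=1$). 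Your parenthetical fix --- ``a scale-by-scale supremum over directions \ldots\ dispenses with the logarithm altogether'' --- does not work as stated: if $x_j$ maximizes $|f|$ on $\{|x|=r_j\}$, then comparing $f(x_j)$ with $f(r_{j-1}x_j/|x_j|)$ still only yields $M(r_j)\le M(r_{j-1})+C\operatorname{diam}f(B_{x_j})$, and summing $O(\log\tfrac{1}{1-r})$ such increments brings back the same logarithm.

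The paper's route around this is the substantive content. It first reduces to $p=n$ by the radial quasiconformal change $g=|f|^{(p-n)/n}f$, so that $M(r,g)^n=M(r,f)^p$ and $|Dg|^n\asymp|f|^{p-n}|Df|^n$; then Lemma~\ref{p>n,negalpha} handles $M^n(r,g)$ against $a_g^n(1-|x|)^{n-1+\alpha}$ by a \emph{good/bad dichotomy} over the dyadic maximizers $x_i$: at ``good'' indices $|f(x_i)|\le\tilde C\,d(f(x_i),\partial\Omega)$ and the shadows $S_{x_i}$ (with bounded overlap) convert the sum into $\int_{\Sn}v^p$; at ``bad'' indices Lemma~\ref{qcbasic1} forces $|f(x_i)|^p(1-|x_i|)^{n-1+\alpha}\le c\,|f(x_{i-1})|^p(1-|x_{i-1}|)^{n-1+\alpha}$ with $c<1$, so those terms are geometrically absorbed into the nearest good one. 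This dichotomy is the missing idea in your sketch; it is precisely what kills the logarithm.

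The same deficit reappears in your treatment of $(\ref{p-averaged-gradient-estimate})$. For $p\ge n$ the paper again uses Lemma~\ref{p>n,negalpha} for $(\ref{p-averaged-gradient-estimate})\Rightarrow(\ref{p-weighted-hp})$, and a direct estimate (your ``extract a maximal factor'' idea, essentially \eqref{p-maximal-implies-averaged-gradient,p>n}) for the converse. For $\alpha\ge0$ and $p<n$, however, the paper does not argue by annulus telescoping at all: it passes through the weighted radial/nontangential maximal functions (Lemma~\ref{weightedmaxmod}), a Carleson-measure argument (Lemmas~\ref{carlesonmeasure}, \ref{carlesonlemma}) for $Mf_{p,\alpha}\in L^p\Rightarrow(\ref{p-averaged-gradient-estimate})$, and a good-$\lambda$ inequality built on the modulus estimate of Lemma~\ref{shadowest} (Lemma~\ref{goodlambdaie}) for the converse. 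Your one-sentence ``weighted-maximal-function type estimate'' gestures at this but supplies none of the mechanism; in particular, nothing in your outline plays the role of Lemma~\ref{shadowest}, which is where the quasiconformality is used in an essential, non-elementary way.
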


%

Actually, we prove a bit more than what is stated in Theorem \ref{for all p}. Namely, for $\alpha\ge 0$ \eqref{p-weighted-hp}-\eqref{p-averaged-gradient-estimate} are further equivalent to
$$\int_{\Sn}\left(\sup_{0<r<1}|f(r\omega)|^p(1-r)^{\alpha}\right)d\sigma$$
and to
$$\int_{\Sn}\left(\sup_{x\in \Gamma(\omega)}|f(x)|^p(1-|x|)^{\alpha}\right)d\sigma .$$
This together with Theorem \ref{for all p} gives a rather complete quasiconformal analog of the characterizations for the weighted $H^p$-spaces of univalent functions in \cite{PR}. 

Our proof of Theorem \ref{for all p} relies on techniques from \cite{AK} and \cite{Z}, but our weighted setting requires some new ideas. For example, the equivalence of \eqref{p-image-area-estimate} and \eqref{p-area-estimate} with the membership in quasiconformal $H^p$ is new even in the unweighted setting.

\vskip 10mm


\section{Background and Preliminaries}
\subsection{Cones and Shadows}\label{cones}

Given $x\in \Bn,$ we define
$$B_x=B(x,(1-|x|)/2)$$
and for $\omega\in\Sn$, we let
$$\Gamma(\omega)=\cup\{B_{r\omega}:0<r<1\}.$$
This is a cone with a tip at $\omega$. Finally, the shadow of $B_x$ is
$$S_x=\{\frac{z}{|z|}:0\neq z\in B_x\}.$$
It is easy to check that $x\in \Gamma(\omega)$ if $\omega\in S_x.$

\subsection{Quasiconformal Mappings}
Let $G\subset \R^n$ be a domain. We say that $f:G\rightarrow\R^n$ is 
a $K$-quasiconformal  mapping for $K\geq 1$ if  $f$ is continuous and 
one-to-one (hence a homeomorphism onto $f(G)$),
 $f\in W^{1,n}_{loc}(G, \R^n)$ and $|Df(x)|\leq K J_f(x)$ for almost 
every $x\in G$. For convenice, we write $f:\Bn\rightarrow\Omega$ below to 
specify that $f$ is defined on $\Bn$ with $f(\Bn)=\Omega.$

We continue with important properties of quasiconformal mappings. The 
following estimates  
can be deduced from \cite[Lemma 2.1]{AK}, also see \cite{V}.

\begin{lemma}\label{qcbasic1}
Let $f:\Bn\rightarrow\Omega$ be a $K$-quasiconformal mapping. There exists a constant $C=C(n,K)$ such that for all $x\in\Bn,$ we have
$$\diam(f(B_x))/C\le d(f(x),\partial\Omega)\le C\diam(f(B_x))\le C^2d(f(B_x),\partial\Omega),$$
{ and} $d(f(x),\partial\Omega))/C\le |f(y)-f(x)|\le Cd(f(x),\partial\Omega))$
for every $y\in\partial B_x.$
\end{lemma}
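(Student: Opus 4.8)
\noindent\emph{Proof plan.} This is the standard ``egg--yolk'' distortion estimate for quasiconformal maps, and the plan is to derive all of its assertions from two classical ingredients. The first is the behaviour of the conformal modulus of ring domains: it is quasi-invariant under a $K$-quasiconformal map (changing by at most a factor $C(n,K)$), and the standard ring-modulus estimates in $\R^n$ (Gr\"otzsch, Teichm\"uller) tie it to the relative separation of the two complementary continua of the ring --- in particular, a definite lower bound on the modulus of a ring with one unbounded complementary component forces the bounded one to be small compared with its distance from the unbounded one, while two continua each containing a ball of radius $r$ and lying within distance $Cr$ of one another cannot be separated by a ring of modulus below a positive constant $c(n,C)$. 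The second ingredient is the local quasisymmetry of $K$-quasiconformal maps: there is an increasing homeomorphism $\eta=\eta_{n,K}\colon[0,\infty)\to[0,\infty)$ such that $f|_{B(a,r)}$ is $\eta$-quasisymmetric whenever $B(a,2r)\subset\Bn$. The geometric remark that activates both tools is that every $v\in\overline{B_x}$ satisfies $|v|\le(1+|x|)/2$, hence $B(v,(1-|x|)/2)\subset\Bn$; so each point of $\overline{B_x}$ carries a definite amount of room, and in particular $f$ is $\eta$-quasisymmetric on $B_x$ since $B(x,1-|x|)\subset\Bn$.

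Write $\rho=(1-|x|)/2$ and, for brevity, $d=d(f(x),\partial\Omega)$. The core step is the two-sided comparison $\diam f(B_x)\asymp d$. For $\diam f(B_x)\le C\,d$, apply the modulus machinery to the round ring $A=B(x,2\rho)\setminus\overline{B_x}\subset\Bn$, whose modulus is a fixed positive constant; then $\Mod f(A)\ge c(n,K)>0$, and $f(A)$ separates the bounded continuum $\overline{f(B_x)}\ni f(x)$ from the unbounded set $\R^n\setminus f(B(x,2\rho))\supset\R^n\setminus\Omega$, whence the lower bound on $\Mod f(A)$ gives $\diam f(B_x)\le C\,d\big(f(x),\R^n\setminus f(B(x,2\rho))\big)\le C\,d$. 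For the reverse inequality $d\le C\,\diam f(B_x)$ --- the one that cannot be obtained from quasisymmetry, since $\partial\Omega$ is the image of $\partial\Bn$ and there is no room around $\partial\Bn$ inside $\Bn$ --- I would argue by contradiction through a ring whose complementary continua actually reach the boundary. Suppose $\diam f(B_x)\le\varepsilon d$; then $f(\overline{B_x})\subset\overline{B(f(x),\varepsilon d)}$, and the round annulus $R_\varepsilon=B(f(x),d)\setminus\overline{B(f(x),\varepsilon d)}\subset\Omega$ has modulus tending to $0$ as $\varepsilon\to0$, so (since $R_\varepsilon=f(W)$ for $W=f^{-1}(R_\varepsilon)\subset\Bn$) also $\Mod W\le C(n,K)\,\Mod R_\varepsilon\to0$. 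But $W$ separates the compact set $f^{-1}(\overline{B(f(x),\varepsilon d)})$, which contains $\overline{B_x}$ and hence a ball of radius $\rho$, from $\R^n\setminus f^{-1}(B(f(x),d))\supset\R^n\setminus\Bn$, which contains a ball of radius $\rho$ lying within distance $2\rho$ of $B_x$; by the second modulus fact above this forces $\Mod W\ge c(n)>0$. For $\varepsilon$ small, depending only on $n$ and $K$, this is a contradiction, so $\diam f(B_x)\ge c(n,K)\,d$.

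The remaining assertions are bookkeeping once $\diam f(B_x)\asymp d$ is known. For $v\in\overline{B_x}$, apply the inequality $\diam f(B(v,\rho/2))\le C\,d(f(v),\partial\Omega)$ (the first bound of the previous paragraph, now for the ball $B(v,\rho/2)$, which has room $B(v,\rho)\subset\Bn$); combining this with $\diam f(B(v,\rho/2))\ge c'\,\diam f(B_x)$, which follows from quasisymmetry applied along a short chain of overlapping balls joining $v$ to $x$, gives $d(f(v),\partial\Omega)\ge c''\,\diam f(B_x)$. Taking the infimum over $v\in\overline{B_x}$ yields $d(f(B_x),\partial\Omega)\ge c''\,\diam f(B_x)$, the last inequality of the chain; together with the trivial $d(f(B_x),\partial\Omega)\le d(f(x),\partial\Omega)$ this gives the first inequality, and the middle inequality is exactly $d\le C\,\diam f(B_x)$. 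Finally, for $y\in\partial B_x$ the bound $|f(y)-f(x)|\le\diam f(B_x)\le C\,d$ is immediate, while quasisymmetry on $B_x$ gives $|f(z)-f(x)|\le\eta(1)\,|f(y)-f(x)|$ for every $z\in\overline{B_x}$ (because $|z-x|\le|y-x|$), so that $|f(y)-f(x)|\ge\tfrac{1}{2\eta(1)}\,\diam f(B_x)\ge c(n,K)\,d$.

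I expect the only real obstacle to be the lower bound $d\le C\,\diam f(B_x)$: because quasisymmetry of $f$ degenerates at $\partial\Bn$, one is forced to reason through the modulus of rings whose complementary continua actually touch the boundary (equivalently, to run a modulus estimate for $f^{-1}$). Everything else is a routine combination of the quasi-invariance of modulus, the standard ring-modulus estimates of \cite{V}, and the uniform local quasisymmetry of $f$ --- indeed, all of it is contained in, or readily deduced from, \cite[Lemma~2.1]{AK}.
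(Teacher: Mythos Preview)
The paper does not prove this lemma at all; it simply records it as a known fact deducible from \cite[Lemma~2.1]{AK} and \cite{V}. Your plan supplies precisely the classical argument (ring-modulus quasi-invariance plus local quasisymmetry) that underlies those references, and your closing remark that everything is contained in \cite[Lemma~2.1]{AK} is exactly the paper's position---so there is nothing to compare, and your outline is correct.
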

 
A quasiconformal mapping is only almost everywhere differentiable and hence
we will employ the the concept of averaged derivative 
$$a_f(x)=\exp[\int_{B_x}\log J_f(y)dy/(n|B_x|)].$$
If $f$ is a conformal mapping, then  $|Df|^n=J_f$ and especially $a_f=|Df|,$
see \cite{AG2} for details and \cite{AG1} for the origins of the definition.

The following lemmas is from \cite{AG2}.

\begin{lemma}\label{qcbasic2}
Let $f:\Bn\rightarrow\Omega$ be a $K$-quasiconformal mapping. There exists a constant $C(n,K)$, with
$$d(f(x),\partial\Omega)/C\le a_f(x)(1-|x|)\le Cd(f(x),\partial\Omega)$$
and $$\frac{1}{C}\barint_{B_x}|Df(y)|^ndy\le Ca_f(x)\le \barint_{B_x}|Df(y)|^ndy$$
for all $x\in\Bn.$
\end{lemma}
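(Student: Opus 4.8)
The statement packages two comparabilities: the first chain asserts $a_f(x)(1-|x|)\approx d(f(x),\partial\Omega)$, and the second asserts that $a_f(x)^n$ is comparable to the mean $\barint_{B_x}|Df(y)|^n\,dy$, i.e.\ the geometric mean of $J_f$ is comparable to its arithmetic mean. The plan is to prove the second comparability first, since it is the analytic heart of the lemma, and then derive the first from it together with a change of variables and the roundness estimate of Lemma~\ref{qcbasic1}. Throughout I will use the two-sided pointwise bound $J_f(y)\le |Df(y)|^n\le K J_f(y)$, valid a.e.: the right inequality is the definition of $K$-quasiconformality, and the left is the elementary fact that the Jacobian determinant is at most the $n$-th power of the operator norm $|Df|$. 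This lets me pass freely between $J_f$ and $|Df|^n$ inside every average at the cost of a factor depending only on $n$ and $K$.

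For the second comparability, recall $a_f(x)^n=\exp\big(\barint_{B_x}\log J_f\big)$ is the geometric mean of $J_f$ over $B_x$. One inequality is soft: by Jensen's inequality applied to the convex function $\exp$,
$$a_f(x)^n=\exp\Big(\barint_{B_x}\log J_f\Big)\le \barint_{B_x}J_f\le \barint_{B_x}|Df|^n,$$
bounding $a_f(x)^n$ by the arithmetic mean. The reverse inequality is where quasiconformality enters decisively: I would invoke Gehring's higher-integrability theorem, which says that $J_f$ satisfies a reverse H\"older inequality on balls with some exponent $q=q(n,K)>1$ and is therefore a Muckenhoupt $A_\infty$ weight. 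A standard equivalent formulation of the $A_\infty$ condition is the reverse-Jensen estimate $\barint_{B}w\le C\exp\big(\barint_{B}\log w\big)$; applying it to $w=J_f$ on $B=B_x$ yields $\barint_{B_x}J_f\le C\,a_f(x)^n$, and hence $\barint_{B_x}|Df|^n\le K\barint_{B_x}J_f\le CK\,a_f(x)^n$. Combining the two directions gives $a_f(x)^n\approx \barint_{B_x}|Df|^n$.

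For the first comparability I would convert averages of $J_f$ into image volume. Since a quasiconformal map is absolutely continuous and injective, the change-of-variables formula gives $\int_{B_x}J_f=|f(B_x)|$, so that $\barint_{B_x}J_f=|f(B_x)|/|B_x|$ with $|B_x|\approx(1-|x|)^n$. Next I use Lemma~\ref{qcbasic1}: because $|f(y)-f(x)|\approx d(f(x),\partial\Omega)$ uniformly for $y\in\partial B_x$ and $f$ is a homeomorphism, the image $f(B_x)$ is trapped between two concentric balls of radius $\approx d(f(x),\partial\Omega)$, whence $|f(B_x)|\approx d(f(x),\partial\Omega)^n$. Chaining these with the second comparability,
$$a_f(x)^n\approx \barint_{B_x}|Df|^n\approx \barint_{B_x}J_f=\frac{|f(B_x)|}{|B_x|}\approx \frac{d(f(x),\partial\Omega)^n}{(1-|x|)^n},$$
and taking $n$-th roots produces $a_f(x)(1-|x|)\approx d(f(x),\partial\Omega)$. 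The main obstacle is the reverse-Jensen direction of the second comparability: everything else is soft (Jensen's inequality, the change of variables, and the already-granted Lemma~\ref{qcbasic1}), but controlling the arithmetic mean of $J_f$ by its geometric mean genuinely requires the self-improving, reverse-H\"older property of quasiconformal Jacobians, that is, Gehring's theorem and the $A_\infty$ machinery.
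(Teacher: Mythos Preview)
The paper does not give its own proof of this lemma: it simply records that ``the following lemma is from \cite{AG2}'' and quotes the statement. So there is no in-paper argument to compare against; your write-up supplies a proof where the paper supplies only a citation.

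Your argument is correct and is essentially the standard one (and, in outline, the one behind the cited Astala--Gehring result). The Jensen direction $a_f^n\le \barint_{B_x}|Df|^n$ is soft; the nontrivial reverse-Jensen direction is precisely Gehring's higher-integrability theorem, which makes $J_f$ an $A_\infty$ weight and hence yields $\barint_{B_x}J_f\le C\exp\big(\barint_{B_x}\log J_f\big)=C\,a_f(x)^n$. The first chain then follows exactly as you indicate: change of variables gives $\int_{B_x}J_f=|f(B_x)|$, and the roundness estimates of Lemma~\ref{qcbasic1} pin $|f(B_x)|\approx d(f(x),\partial\Omega)^n$, while $|B_x|\approx(1-|x|)^n$. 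One small remark: the only place one should be careful is that Gehring's reverse H\"older is usually stated for balls whose double lies in the domain; since $B_x=B(x,(1-|x|)/2)$ has $2B_x\subset\Bn$, this is harmless here. You also tacitly (and correctly) repaired the stated second chain: dimensionally the comparability must be between $a_f(x)^n$ and $\barint_{B_x}|Df|^n$, not between $a_f(x)$ and that average, and this is how the lemma is used later in the paper.
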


The following result is \cite[Lemma 2.5]{AK}.


\begin{lemma}\label{qcbasic3}
Let $f:\B^n\rightarrow\Omega$ be a $K$-quasiconformal mapping, and suppose that $u>0$ satisfies 
$$u(y)/C\le u(x)\le Cu(y)$$
for all $x\in\B^n$ and $y\in B_x$. Let $0<q\le n$ and $p\ge q$. Then
\[\int_{\B^n}a_f^pu\, dx\approx\int_{\B^n} a_f^{p-q}|Df(x)|^qu\, dx.\]
with constants only depending on $p,q,n,C,K$.
\end{lemma}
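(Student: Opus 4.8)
The plan is to reduce the asserted global equivalence to a single Whitney‑scale pointwise estimate for the derivative and then to localize by a covering argument; beyond Lemmas~\ref{qcbasic1}--\ref{qcbasic2} the only quasiconformality really used is a reverse Hölder inequality for the Jacobian, which those lemmas already package. The first step is to prove that for every $x\in\B^n$ and every $0<q\le n$,
\[
\barint_{B_x}|Df(y)|^q\,dy\ \approx\ a_f(x)^q ,
\]
with constants depending only on $n,K,q$. The inequality $\gtrsim$ is pure convexity: since $J_f(y)^{1/n}\le|Df(y)|$ a.e., Jensen's inequality applied to the definition of $a_f$ gives $a_f(x)=\exp\bigl(\barint_{B_x}\tfrac1n\log J_f\bigr)\le\exp\bigl(\barint_{B_x}\log|Df|\bigr)\le\bigl(\barint_{B_x}|Df|^q\bigr)^{1/q}$. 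For $\lesssim$ one uses $q\le n$: Hölder's inequality on the normalized measure gives $\barint_{B_x}|Df|^q\le\bigl(\barint_{B_x}|Df|^n\bigr)^{q/n}$, and $\barint_{B_x}|Df|^n\lesssim a_f(x)^n$ is exactly Lemma~\ref{qcbasic2}.

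Next I would introduce the auxiliary weight $w:=a_f^{\,p-q}u$, so that $a_f^p u=a_f^q w$ and $a_f^{\,p-q}|Df|^q u=|Df|^q w$; the claim then becomes $\int_{\B^n}a_f^q w\,dx\approx\int_{\B^n}|Df|^q w\,dx$. The gain is that $w$ inherits the local comparability of $u$: one has $u(y)\approx u(x)$ for $y\in B_x$ by hypothesis, and $a_f(y)\approx a_f(x)$ for $y\in B_x$ by Lemmas~\ref{qcbasic1}--\ref{qcbasic2} (combine $a_f(x)(1-|x|)\approx d(f(x),\partial\Omega)$ with $1-|y|\approx 1-|x|$ and $d(f(y),\partial\Omega)\approx d(f(x),\partial\Omega)$), hence $w(y)\approx w(x)$ for $y\in B_x$. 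This is where assuming $p\ge q$ is convenient, since otherwise the comparison constant for $w$ picks up a factor involving $|p-q|$ in the exponent.

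Finally I would localize. Using the Besicovitch covering lemma on the family $\{B_x:x\in\B^n\}$, fix a countable subfamily $\{B_{x_i}\}$ covering $\B^n$ with $\sum_i\chi_{B_{x_i}}\le N(n)$. On each $B_{x_i}$ both $a_f$ and $w$ are comparable to their values at $x_i$, so Step~1 yields $\int_{B_{x_i}}|Df|^q\,dx=|B_{x_i}|\barint_{B_{x_i}}|Df|^q\approx|B_{x_i}|\,a_f(x_i)^q\approx\int_{B_{x_i}}a_f^q\,dx$. Multiplying through by $w(x_i)$, summing over $i$, and invoking the covering property and the bounded overlap in both directions gives $\int_{\B^n}|Df|^q w\approx\int_{\B^n}a_f^q w$, which is the assertion once the substitution $w=a_f^{\,p-q}u$ is undone.

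I expect the main obstacle to be bookkeeping rather than ideas: the substantive quasiconformal fact, the reverse bound $\barint_{B_x}|Df|^n\lesssim a_f(x)^n$, is handed to us in Lemma~\ref{qcbasic2}, so what really needs care is making Step~3 precise — choosing the covering, carrying out the two‑sided sum, and, in particular, checking that the uniform comparability $a_f(y)\approx a_f(x)$ for $y\in B_x$ (and hence for $w$) holds with constants depending only on $n,K$, since this comparison together with the constant $C$ in the hypothesis on $u$ controls all constants in the conclusion.
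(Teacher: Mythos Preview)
The paper does not give a proof of this lemma; it simply records it as \cite[Lemma~2.5]{AK}. So there is nothing to compare against in the present paper.

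Your argument is correct and is essentially the standard one. The heart of the matter is the two-sided Whitney-scale estimate $\barint_{B_x}|Df|^q\approx a_f(x)^q$ for $0<q\le n$, which you obtain exactly as one should: Jensen's inequality (via $J_f^{1/n}\le|Df|$ and the log-average definition of $a_f$) gives the lower bound, and H\"older together with the reverse-H\"older inequality $\barint_{B_x}|Df|^n\lesssim a_f(x)^n$ encoded in Lemma~\ref{qcbasic2} gives the upper bound. The remaining steps---checking that $a_f$ and hence $w=a_f^{p-q}u$ are essentially constant on each $B_x$ (from Lemmas~\ref{qcbasic1}--\ref{qcbasic2} and the hypothesis on $u$), and then summing over a bounded-overlap Whitney-type covering---are routine, as you anticipate. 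The restriction $p\ge q$ is not actually needed for the mechanism of your proof (the comparability constant for $a_f^{p-q}$ depends only on $|p-q|$, $n$, $K$), but since the lemma asserts only that range, this is immaterial.
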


We continue with a useful estimate.

\begin{lemma}\label{afubini}
Let $f:\B^n\rightarrow\Omega$ be a $K$-quasiconformal mapping. Let $0<p<\infty$
and $\alpha\in \R.$ Then
$$ \int_{\Sn} \sup_{x\in\Gamma(\omega)}d(f(x),\partial \Omega)^p(1-|x|)^{\alpha}d\sigma\le C_1 \int_{\Sn} \sup_{x\in\Gamma(\omega)}a_f^p(x)(1-|x|)^{p+\alpha}d\sigma\le C_2 \int_{\B^n}a_f^p(x)(1-|x|)^{p+\alpha}dx$$
with constants $C_1,C_2$ that only depend on $n,K,p,\alpha.$
\end{lemma}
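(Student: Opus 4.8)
The first inequality is immediate from Lemma~\ref{qcbasic2}: pointwise one has $d(f(x),\partial\Omega)^p(1-|x|)^\alpha\le C^p a_f^p(x)(1-|x|)^{p+\alpha}$, so taking $\sup_{x\in\Gamma(\omega)}$ and integrating $d\sigma$ gives the first estimate with $C_1=C^p$. The substance is the second inequality, which is a reverse nontangential maximal function bound. My plan is to first show that the integrand $h(x):=a_f^p(x)(1-|x|)^{p+\alpha}$ is \emph{slowly varying} on Whitney balls, i.e.\ $h(y)\approx h(x)$ whenever $y\in B_x$ (the hypothesis imposed on $u$ in Lemma~\ref{qcbasic3} is of exactly this type). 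Indeed $1-|y|\approx 1-|x|$ on $B_x$, so $(1-|y|)^{p+\alpha}\approx(1-|x|)^{p+\alpha}$; and combining Lemma~\ref{qcbasic2} (which gives $a_f(x)(1-|x|)\approx d(f(x),\partial\Omega)$) with Lemma~\ref{qcbasic1} (which, after chaining over a bounded number of comparable balls, gives $d(f(y),\partial\Omega)\approx d(f(x),\partial\Omega)$ on $B_x$) yields $a_f(y)\approx a_f(x)$ on $B_x$, hence $h(y)\approx h(x)$.

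Then I would run the standard covering/Fubini scheme. For each $\omega\in\Sn$ pick (modulo an $\varepsilon$, or a maximizing sequence) a point $x_\omega\in\Gamma(\omega)$ with $h(x_\omega)\ge\tfrac12\sup_{x\in\Gamma(\omega)}h(x)$; by the slowly varying property $h(x_\omega)\le C\,|B_{x_\omega}|^{-1}\int_{B_{x_\omega}}h$. Integrating over $\Sn$ and applying Fubini,
\[
\int_{\Sn}\sup_{x\in\Gamma(\omega)}h(x)\,d\sigma(\omega)\;\lesssim\;\int_{\B^n}h(y)\Big(\int_{\{\omega:\,y\in B_{x_\omega}\}}\frac{d\sigma(\omega)}{|B_{x_\omega}|}\Big)\,dy .
\]
For the inner integral: if $y\in B_{x_\omega}$ then $1-|x_\omega|\approx 1-|y|$, so $|B_{x_\omega}|\approx(1-|y|)^n$, and $\omega$ is confined to a spherical cap about $y/|y|$ of radius $\approx 1-|y|$, whose $\sigma$-measure is $\approx(1-|y|)^{n-1}$ — the same scaling as $\sigma(S_x)\approx(1-|x|)^{n-1}$. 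Carrying this through gives a bound of the form $C_2\int_{\B^n}a_f^p(y)(1-|y|)^{p+\alpha}\,dy$, and incidentally shows that finiteness of the right-hand side forces $\sup_{x\in\Gamma(\omega)}h(x)<\infty$ for $\sigma$-a.e.\ $\omega$, so choosing $x_\omega$ is legitimate.

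The main obstacle is exactly this inner-integral estimate. The assignment $\omega\mapsto x_\omega$ is produced by maximization and is a priori uncontrolled, so one cannot feed a clean Whitney decomposition into Fubini; instead one must extract rigidity purely from the membership $y\in B(x_\omega,(1-|x_\omega|)/2)$ — this pins down the Whitney scale $1-|x_\omega|$ and the direction $\omega$ up to constants — and then count the resulting overlap, keeping careful track of which power of $1-|y|$ survives (this is the only place where the precise exponent in the statement is at stake). A secondary point requiring care is the slowly varying property itself, where passing from $B_x$ to a slightly larger ball in Lemma~\ref{qcbasic1} needs the short chaining argument mentioned above. Everything else — passing from pointwise bounds to suprema, the $\varepsilon$-bookkeeping, and collecting constants — is routine.
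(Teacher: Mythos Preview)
Your first inequality matches the paper's exactly. For the second, your approach is essentially correct but unnecessarily roundabout: the ``main obstacle'' you flag---controlling the inner integral when $\omega\mapsto x_\omega$ is an uncontrolled (possibly non-measurable) selection---is self-inflicted and evaporates if you never select $x_\omega$. The paper simply observes that for \emph{every} $x\in\Gamma(\omega)$ the slowly-varying property gives
\[
a_f^p(x)(1-|x|)^{p+\alpha}\;\le\;C\,|B_x|^{-1}\!\int_{B_x}a_f^p(y)(1-|y|)^{p+\alpha}\,dy\;\le\;C\int_{\Gamma(\omega)}a_f^p(y)(1-|y|)^{p-n+\alpha}\,dy,
\]
using only $B_x\subset$ (a fixed enlargement of) $\Gamma(\omega)$ and $(1-|y|)\approx(1-|x|)$ on $B_x$. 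The right-hand side is independent of $x$, so it already dominates the supremum; now integrate over $\Sn$ and invoke the standard cone--Fubini identity
\[
\int_{\Sn}\int_{\Gamma(\omega)}|h(y)|(1-|y|)^{1-n}\,dy\,d\sigma(\omega)\;\approx\;\int_{\Bn}|h(y)|\,dy
\]
with $h(y)=a_f^p(y)(1-|y|)^{p-1+\alpha}$. No near-maximizer, no measurable selection, no inner-integral bookkeeping.

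Two side remarks. First, carrying either computation through yields $\int_{\Bn}a_f^p(x)(1-|x|)^{p-1+\alpha}\,dx$, not exponent $p+\alpha$; the displayed statement has a typo, as the paper's own applications of this lemma (in Lemmas~\ref{energychar} and~\ref{p>n,negalpha}) confirm. Second, your derivation of the slowly-varying property of $a_f$ by chaining Lemmas~\ref{qcbasic1} and~\ref{qcbasic2} is correct and is precisely what the paper uses implicitly in the step $a_f(x)\lesssim\bigl(\int_{B_x}a_f^p\bigr)^{1/p}(1-|x|)^{-n/p}$.
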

\begin{proof}  
By Lemma \ref{qcbasic2} and Lemma \ref{qcbasic1}
applied to  $x\in\Gamma(\omega)$ we have the 
estimate 
$$d(f(x),\partial \Omega)(1-|x|)^{\alpha/p} \le C_1a_f(x)(1-|x|)^{1+\alpha/p}
$$
$$\le C_2(\int_{B_x}a_f^p(y)dy)^{1/p}(1-|x|)^{1-n/p+\alpha/p}\le C_3(\int_{\Gamma(\omega)}a_f(x)^p(1-|x|)^{p-n+\alpha}dx)^{1/p}$$
with constants that only depend on $n,K,p,\alpha.$

For any integrable function $h$ on $\Bn$, by the Fubini theorem
$$\int_{\Bn}|h(x)|dx\approx \int_{\Sn}\int_{\Gamma(\omega)}|h(y)|(1-|y|)^{1-n}dyd\sigma.$$
Especially this holds for $h(x)=a_f^p(x)(1-|x|)^{p-1+\alpha}$ and the claim 
follows.
\end{proof}

A measure $\mu$ on $\Bn$ is called a Carleson measure if there is a constant 
$C_{\mu}$ such that 
$$
\mu(\Bn \cap B(\omega, r)) \leq C_{\mu}r^{n-1}
$$ 
for all $\omega \in \Sn$, $r> 0$. The following lemma, see 
\cite{Jo} and \cite[Lemma 5.6]{AK}, gives us a family of Carleson measures. 

\begin{lemma}\label{carlesonmeasure}
If $f$ is quasiconformal in $\Bn$, $0 < p < n$, and $f(x) \neq 0$ for all $x\in \Bn$, then the measure $\mu$ defined by $d\mu = |Df(x)|^p|f(x)|^{-p}(1-|x|)^{p-1}dx$ is a Carleson measure on $\Bn$.
\end{lemma}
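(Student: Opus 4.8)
The plan is to verify the Carleson condition directly on Carleson boxes by a Whitney‑type decomposition; no boundary behaviour of $f$ is needed. Fix $\omega_0\in\Sn$, $r>0$, and write $Q=B(\omega_0,r)\cap\Bn$. Let $\{P\}$ be the Whitney cubes of $\Bn$ meeting $Q$, so that $\ell(P)\sim 1-|x_P|$ for the centre $x_P$, $P\subset C_0B_{x_P}$ for a dimensional constant $C_0$, and $\#\{P:\ell(P)=2^{-k}\}\lesssim(r2^{k})^{n-1}$ when $2^{-k}\lesssim r$. Since $\bigcup_P P\supset Q$ it suffices to bound $\sum_P\mu(P)$ by $C\max(r,1)^{n-1}$.

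Everything rests on the hypothesis $0\notin\Omega=f(\Bn)$: the segment $[0,f(x)]$ must meet $\partial\Omega$, so $d(f(x),\partial\Omega)\le|f(x)|$ for every $x$. Together with Lemma~\ref{qcbasic1} (applied on $C_0B_x$ through the usual Harnack chaining) this gives, for $z\in C_0B_x$, that $\diam f(C_0B_x)\lesssim d(f(x),\partial\Omega)$ and $\dist(f(C_0B_x),0)\ge\dist(f(C_0B_x),\partial\Omega)\gtrsim d(f(x),\partial\Omega)$, whence $|f(z)|\gtrsim|f(x)|$ on $C_0B_x$; in particular $|f|\gtrsim|f(x_P)|$ on each $P$. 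Using $(1-|x|)^{p-1}\sim\ell(P)^{p-1}$ on $P$, Hölder's inequality with exponents $n/p$ and $n/(n-p)$, the area formula $\int_PJ_f\,dx=|f(P)|$, and the quasiconformality bound $|Df|^n\lesssim J_f$, I obtain the per‑cube estimate
\[
\mu(P)\ \lesssim\ |f(x_P)|^{-p}\,\ell(P)^{\,n-1}\Big(\int_P|Df|^n\,dx\Big)^{p/n}.
\]
The naive simplification $\int_P|Df|^n\lesssim|f(P)|\lesssim(\diam f(P))^n\lesssim|f(x_P)|^n$ collapses this to $\mu(P)\lesssim\ell(P)^{n-1}$, and $\sum_{P\subset Q}\ell(P)^{n-1}$ diverges; so genuine cancellation has to be extracted, and this is the crux.

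The cancellation comes from sorting the cubes by the dyadic size of $|f|$. Put $\mathcal P_j=\{P:\ 2^j\le|f(x_P)|<2^{j+1}\}$, fix a base point $x_Q\in Q$ with $1-|x_Q|\sim r$ (take $x_Q=0$ when $r\ge1$), and set $2^{j_0}=|f(x_Q)|$. Two facts cooperate. First, $f$ is injective, so the sets $f(P)$ have disjoint interiors, and by the previous paragraph $f(P)\subset B(0,C2^j)$ for $P\in\mathcal P_j$; hence $\sum_{P\in\mathcal P_j}\int_P|Df|^n\,dx\lesssim\sum_{P\in\mathcal P_j}|f(P)|\le|B(0,C2^j)|\lesssim 2^{jn}$. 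Second, chaining Lemma~\ref{qcbasic1} together with the bound $|f(z)|\gtrsim|f(x)|$ on $C_0B_x$ along a Harnack chain of Whitney balls from $x_Q$ to $x$ — whose length is $\lesssim\log\tfrac{r}{1-|x|}+1$ — gives $\big|\log\tfrac{|f(x)|}{|f(x_Q)|}\big|\lesssim\log\tfrac{r}{1-|x|}+C$ for $x\in Q$, so any $P\in\mathcal P_j$ has $\ell(P)\lesssim r\,2^{-|j-j_0|/A}$ for a constant $A=A(n,K)$. Applying Hölder over $\mathcal P_j$ with the same exponents and combining these two facts with the cube count of the first paragraph,
\[
\sum_{P\in\mathcal P_j}\mu(P)\ \lesssim\ 2^{-jp}\Big(\sum_{P\in\mathcal P_j}\int_P|Df|^n\Big)^{p/n}\Big(\sum_{P\in\mathcal P_j}\ell(P)^{(n-1)\frac{n}{n-p}}\Big)^{\frac{n-p}{n}}\ \lesssim\ r^{n-1}\,2^{-c|j-j_0|},
\]
where $c=c(n,K,p)>0$ arises from summing $2^{-k(n-1)p/(n-p)}$ over $k\gtrsim|j-j_0|/A$ (with $2^{-k_0}\sim r$). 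Summing the geometric series $\sum_{j\in\mathbb Z}2^{-c|j-j_0|}$ gives $\mu(Q)\lesssim r^{n-1}$ with a constant depending only on $n,K,p$; the same computation with $x_Q=0$ over all Whitney cubes of $\Bn$ gives $\mu(\Bn)\lesssim1$, which handles $r\ge1$.

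The one real difficulty is the step just sketched: the per‑cube estimate alone is far too lossy, while a single global Hölder inequality is ruled out because the total image volume $|f(Q)|$ may be infinite. One genuinely has to marry the disjointness of the images $f(P)$ inside a fixed dyadic annulus $\{|y|\sim2^j\}$ (which, via $0\notin\Omega$, yields the bound $2^{jn}$ for their combined $|Df|^n$‑mass) with the quasihyperbolic distortion estimate (which forces the Whitney cubes landing in that annulus to be geometrically small in $|j-j_0|$) in order to produce a bound that is summable in $j$. The remaining ingredients — the Whitney bookkeeping, the chaining in Lemma~\ref{qcbasic1}, the area formula for quasiconformal maps, and the standard length estimate for Harnack chains — are routine.
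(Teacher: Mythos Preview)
The paper does not prove this lemma; it simply quotes it from Jones \cite{Jo} and Astala--Koskela \cite[Lemma~5.6]{AK}. Your argument supplies a correct self-contained proof. The strategy---using that $\log|f|$ is quasihyperbolically Lipschitz when $0\notin f(\Bn)$, sorting the Whitney cubes by the dyadic level of $|f(x_P)|$, and then marrying the injectivity of $f$ (which caps $\sum_{P\in\mathcal P_j}\int_P|Df|^n$ by the volume $\lesssim 2^{jn}$ of the ball containing the images) with the distortion bound $\ell(P)\lesssim r\,2^{-|j-j_0|/A}$---is essentially the argument in \cite{AK}, carried out by hand rather than by appeal to a general Bloch/BMO Carleson-measure principle. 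The H\"older step over $\mathcal P_j$ and the geometric summation in $j$ are correct as you wrote them.

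One genuine slip in the exposition: the inference ``whence $|f(z)|\gtrsim|f(x)|$ on $C_0B_x$'' does not follow from the two displayed facts preceding it. Those only give $|f(z)|\ge\dist(f(C_0B_x),0)\gtrsim d(f(x),\partial\Omega)$, and $d(f(x),\partial\Omega)$ can be much smaller than $|f(x)|$ (nothing forces $0$ to be a nearest boundary point). The statement you need is nonetheless true, and the fix is the one you already use later: pass to a smaller ball $B(x,c(1-|x|))$ with $c=c(n,K)$ chosen so that $\diam f(B(x,c(1-|x|)))\le\tfrac12\,d(f(x),\partial\Omega)\le\tfrac12|f(x)|$, which forces $|f(z)|\ge\tfrac12|f(x)|$ there, and then Harnack-chain across $C_0B_x$. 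With this correction the rest of your argument---in particular the per-cube estimate, the bound $f(P)\subset B(0,C2^j)$, and the chain-length control of $|\log(|f(x_P)|/|f(x_Q)|)|$---goes through unchanged.
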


\subsection{Modulus}
Given a collection of locally rectifiable curves $\Gamma$ in $\R^n$, the modulus of $\Mod(\Gamma)$ is defined as:
$$\Mod(\Gamma)=\inf\int_{\R^n}\rho^ndx,$$
where the infimum is taken for all nonnegative Borel functions $\rho$ such that $\int_{\gamma}\rho ds\ge 1$ when $\gamma\in \Gamma$.
Given a $K$-quasiconformal mapping $f:\Omega\rightarrow \R^n$, one has 
$\Mod(\Gamma)/C\le \Mod(f\Gamma)\le C\Mod(\Gamma)$,  where $C=C(K,n)$. See e.g.
\cite{V} for a proof. 

We recall two useful estimates, see \cite{V}. Given $E\subset \Sn,$ $0<r<1,$ 
and the 
family $\Gamma$ of radial segments joining $rE:=\{rx/|x|: x\in E\}$ and $E$, 
we have
$$\Mod(\Gamma)=\sigma(E)(\log(1/r))^{1-n},$$
where $\sigma (E)$ is the surface area of $E$. For an upper bound, we always 
have
$$\Mod(\Gamma)\le \frac{\omega_{n-1}}{\log(R/r)^{n-1}},$$ 
if each $\gamma\in\Gamma$ joins $S^{n-1}(x,r)$ to $S^{n-1}(x,R)$, $0<r<R$.

The following modulus estimate that can be found in \cite{AK} and \cite{Z}, 
is  one of our key tools. 

\begin{lemma}\label{shadowest}\cite[Lemma 4.2, Remark 4.3]{AK}
There exists a constant $C = C(n, K)$ such that if $f$ is K-quasiconformal in $\Bn$, $x\in \Bn$, $M > 1$, and $\alpha \geq 0$, then 
\begin{eqnarray*}
\sigma(\{\omega \in S_x : d(f(w), f(x))(1-|x|)^\alpha > Md(f(x), \partial f(\Bn))(1-|x|)^\alpha\}) \leq \frac{C \sigma(S(x))}{(\log M)^{n-1}}\,.
\end{eqnarray*}
\end{lemma}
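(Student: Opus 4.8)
The plan is to combine the conformal invariance of modulus with the two elementary modulus computations recalled above. First I would observe that the factors $(1-|x|)^\alpha$ cancel on the two sides, so the inequality is equivalent to its $\alpha=0$ instance; thus, writing $\Omega=f(\Bn)$, $y_0=f(x)$, $d=d(y_0,\partial\Omega)$, and $f(\omega)=\lim_{r\to1}f(r\omega)$ for the radial limit (which exists for a.e.\ $\omega$ by Beurling's theorem), the task is to bound $\sigma(E)$ with $E=\{\omega\in S_x:\ |f(\omega)-y_0|>Md\}$. I may assume $M\ge M_0$ for a suitably large $M_0=M_0(n,K)$, since otherwise $(\log M)^{1-n}$ is bounded below and the estimate follows from $\sigma(E)\le\sigma(S_x)$; and I may assume $|x|\ge\tfrac12$, the range $|x|<\tfrac12$ being treated similarly and more easily, all relevant length scales being comparable to dimensional constants there.

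For $\omega\in E$, let $t_\omega$ be the largest $t$ with $t\omega\in\overline{B_x}$, so that $t_\omega\omega\in\partial B_x$, and set $I_\omega=\{s\omega:\ t_\omega\le s<1\}$. A direct computation with $B_x=B(x,(1-|x|)/2)$ and the ray through $\omega$ gives $\tfrac12(1-|x|)\le 1-t_\omega\le 2(1-|x|)$ once $|x|\ge\tfrac12$, so the radii $t_\omega$ stay above a positive constant $\rho_*=\rho_*(n)$ with $1-\rho_*\asymp 1-|x|$. By Lemma \ref{qcbasic1} we have $|f(t_\omega\omega)-y_0|\le C_1 d$ with $C_1=C_1(n,K)$; and since $f(s\omega)\to f(\omega)$ as $s\to1$ while $|f(\omega)-y_0|>Md$, the curve $f(I_\omega)$ starts in $\overline{B(y_0,C_1 d)}$ and eventually leaves $\overline{B(y_0,Md)}$, hence contains a subcurve joining the concentric spheres $S(y_0,C_1 d)$ and $S(y_0,Md)$. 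Thus the image family $\{f(I_\omega):\omega\in E\}$ is minorized by the family of all curves joining those two spheres, and the concentric-sphere estimate gives $\Mod(\{f(I_\omega):\omega\in E\})\le\omega_{n-1}(\log(M/C_1))^{1-n}\le C(n)(\log M)^{1-n}$ as soon as $M\ge M_0\ge C_1^2$. Quasi-invariance of modulus under the $K$-quasiconformal map $f$ then gives $\Mod(\{I_\omega:\omega\in E\})\le C(n,K)(\log M)^{1-n}$.

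For the matching lower bound I would use that each $I_\omega$ is contained, as a subset of its ray, in the full radial segment from $\rho_*\omega$ to $\omega$; hence every density admissible for $\{I_\omega:\omega\in E\}$ is also admissible for the family of radial segments joining $\rho_*E$ to $E$, and the radial-segment formula yields $\Mod(\{I_\omega:\omega\in E\})\ge\sigma(E)(\log(1/\rho_*))^{1-n}$. Combining the two bounds, $\sigma(E)\le C(n,K)(\log(1/\rho_*))^{n-1}(\log M)^{1-n}$. Since $\log(1/\rho_*)\asymp 1-|x|$ and $\sigma(S_x)\gtrsim(1-|x|)^{n-1}$ (the shadow $S_x$ contains a spherical cap of angular radius $\gtrsim 1-|x|$), we get $(\log(1/\rho_*))^{n-1}\le C(n)\sigma(S_x)$ and therefore $\sigma(E)\le C(n,K)\sigma(S_x)(\log M)^{1-n}$, which is the asserted inequality.

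The step I expect to be the crux is matching the two modulus computations. The upper bound forces the curves $I_\omega$ to emanate from $\partial B_x$, so that their images begin within $C_1 d$ of $y_0$; this produces a curve family whose inner endpoints $t_\omega\omega$ move with $\omega$, whereas the radial-segment modulus formula applies to families sharing one inner radius. The two-sided estimate $1-t_\omega\asymp 1-|x|$ is exactly what bridges this, at a cost bounded in terms of $n$ and $K$. That estimate degenerates as $|x|\to 0$, when $B_x$ contains the origin and some rays only graze it, which is precisely why the range of small $|x|$ is peeled off separately; there, however, all scales are comparable to dimensional constants and the argument only simplifies.
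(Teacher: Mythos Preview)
The paper does not prove this lemma; it is quoted from \cite[Lemma 4.2, Remark 4.3]{AK} with no argument given. Your proof is correct and is essentially the standard modulus argument one finds in that reference: cancel the $(1-|x|)^\alpha$ factors, bound the modulus of the image family from above via the spherical-ring estimate (using Lemma~\ref{qcbasic1} to pin the inner radius to $C_1 d$), bound the modulus of the preimage radial segments from below via the exact formula, and compare through quasi-invariance. The one place to be slightly careful in writing it up is the monotonicity direction in your lower bound: you want that each full radial segment $J_\omega=[\rho_*\omega,\omega)$ contains $I_\omega$, hence every $\rho$ admissible for $\{I_\omega\}$ is admissible for $\{J_\omega\}$, giving $\Mod(\{I_\omega\})\ge \Mod(\{J_\omega\})=\sigma(E)(\log(1/\rho_*))^{1-n}$; you have this right, but the phrasing ``minorized'' earlier in the paragraph is used in the opposite sense for the image family, so keep the two directions straight.
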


\subsection{Nontangential and Radial Maximal Functions}
Given $p>0,\alpha\ge 0,$ we define the weighted radial maximal and 
nontangential maximal functions  by setting

\begin{eqnarray*}
&Mf_{p,\alpha}(\omega)=\sup_{0<r<1}|f(r\omega)|(1-r)^{\alpha/p},\omega\in\Sn\\
\text{and}\,&\,M^*f_{p,\alpha}(\omega)=\sup_{x\in\Gamma(\omega)}|f(x)|(1-|x|)^{\alpha/p},\omega\in\Sn
\end{eqnarray*}

Even though the nontangential maximal function can be larger than the radial 
one, we have the following estimate.
   
\begin{lemma}\label{maximal function estimate}
Let $f: \Bn\rightarrow \R^n$ be a $K$-quasiconformal mapping and let 
$0<p<\infty$ and $\alpha\ge 0$. There exists a constant 
$C=C(n,K,p,\alpha)$ such that 
\begin{eqnarray}\label{nontangential maximal estimate}
 \int_{\Sn}  (M^*_{p,\alpha}(\omega))^p d\sigma(\omega) \leq C(n,K) \int_{\Sn} 
(M_{p,\alpha}f(\omega))^p d\sigma(\omega).
\end{eqnarray}
\end{lemma}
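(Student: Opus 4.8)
\medskip
\noindent\textbf{Proof plan.} The plan is to peel off two reductions — first to a weak‑type (distributional) inequality, then to a geometric statement about shadows — and then to prove the geometric statement by a short case analysis whose one non‑trivial case is handled by the modulus estimate of Lemma~\ref{shadowest}.

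\emph{Reductions.} First I would observe that it suffices to find $C_0=C_0(n,K,p,\alpha)$ and $C_1=C_1(n,K)$ with
\[
\sigma\big(\{\omega:M^*_{p,\alpha}f(\omega)>C_0\lambda\}\big)\le C_1\,\sigma\big(\{\omega:M_{p,\alpha}f(\omega)>\lambda\}\big)\qquad(\lambda>0),
\]
since \eqref{nontangential maximal estimate} then follows from the layer‑cake formula $\int(M^*_{p,\alpha}f)^p\,d\sigma=p\int_0^\infty\lambda^{p-1}\sigma(\{M^*_{p,\alpha}f>\lambda\})\,d\lambda$ and the substitution $\lambda=C_0t$. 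This in turn would follow from the geometric claim
\[
(\star)\qquad \sigma\big(\{\omega\in S_x:M_{p,\alpha}f(\omega)\ge C_0^{-1}|f(x)|(1-|x|)^{\alpha/p}\}\big)\ \ge\ c_0\,\sigma(S_x)\qquad\text{for all }x\in\Bn,
\]
with $c_0=c_0(n,K)\in(0,1]$: indeed $\{M^*_{p,\alpha}f>C_0\lambda\}$ is the union of the shadows $S_x$ over those $x$ with $|f(x)|(1-|x|)^{\alpha/p}>C_0\lambda$, these shadows are comparable to spherical caps, so a standard covering lemma produces a disjoint subfamily $\{S_{x_i}\}$ with $\sigma(\{M^*_{p,\alpha}f>C_0\lambda\})\le C(n)\sum_i\sigma(S_{x_i})$, and $(\star)$ together with disjointness bounds $\sum_i\sigma(S_{x_i})$ by $c_0^{-1}\sigma(\{M_{p,\alpha}f>\lambda\})$.

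\emph{The easy cases of $(\star)$.} Fix $x$, put $d_x=d(f(x),\partial f(\Bn))$, and let $C$ be the constant in Lemma~\ref{qcbasic1}. For $\omega\in S_x$ the radial segment toward $\omega$ hits $B_x$ at some $z_\omega$ with $1-|z_\omega|\approx 1-|x|$, so, using $\alpha\ge0$,
\[
M_{p,\alpha}f(\omega)\ \ge\ |f(z_\omega)|(1-|z_\omega|)^{\alpha/p}\ \gtrsim\ |f(z_\omega)|\,(1-|x|)^{\alpha/p},
\]
and it is enough to make $|f(z_\omega)|\gtrsim|f(x)|$ hold for a fixed fraction of $\omega\in S_x$. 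I would split into three regimes. If $d_x\le\tfrac1{2C}|f(x)|$, then Lemma~\ref{qcbasic1} gives $|f(z_\omega)-f(x)|\le Cd_x\le\tfrac12|f(x)|$, hence $|f(z_\omega)|\ge\tfrac12|f(x)|$, for \emph{every} $\omega\in S_x$. If $d_x\ge 2C|f(x)|$, then taking a point $y_\omega\in\partial B_x$ on the radial segment toward $\omega$ gives $|f(y_\omega)|\ge|f(y_\omega)-f(x)|-|f(x)|\ge C^{-1}d_x-|f(x)|\ge|f(x)|$ for every $\omega\in S_x$ (and $1-|y_\omega|\approx1-|x|$). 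And if $0\notin f(\Bn)$, then $|f(y)|\ge d(f(y),\partial f(\Bn))$ for all $y$ (the segment from $f(y)$ to $0$ must exit $f(\Bn)$), so Lemma~\ref{qcbasic1} gives $|f(z_\omega)|\ge d(f(z_\omega),\partial f(\Bn))\ge C^{-2}d_x$, which in the intermediate range $\tfrac1{2C}|f(x)|<d_x<2C|f(x)|$ is still $\gtrsim|f(x)|$, again for every $\omega\in S_x$. In all of these $(\star)$ holds with $c_0=1$.

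\emph{The remaining case — the main obstacle.} What is left is $0\in f(\Bn)$ together with $d_x\approx|f(x)|$; here one further checks $\rho:=d(0,\partial f(\Bn))\le|f(x)|+d_x\lesssim|f(x)|$ and, pushing the same distance estimates a little further, that in fact $\rho$, $d_x$ and $|f(x)|$ are all mutually comparable. In this situation the elementary distance bounds no longer exclude $f(z_\omega)$ from being close to $0$, and this is exactly where Lemma~\ref{shadowest} must enter: the ``bad'' directions are those $\omega\in S_x$ along whose chord in $B_x$ the quantity $d(f(\cdot),f(x))$ stays comparable to $|f(x)|\approx d_x$, and I would feed this into Lemma~\ref{shadowest} with an absolute threshold $M=M_0(n,K)$ (chosen so that the right‑hand side of that lemma is $<\tfrac12\sigma(S_x)$) to conclude that for at least half the $\omega\in S_x$ the radial excursion of $f$ is controlled, and then use this control together with $\rho\approx d_x\approx|f(x)|$ and Lemma~\ref{qcbasic1} to exhibit, on that chord, a point where $|f|\gtrsim|f(x)|$. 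I expect precisely this last step — converting the upper bound on $d(f(\cdot),f(x))$ supplied by the shadow lemma into a \emph{lower} bound for $|f|$ on the radial segment, uniformly in the local shape of $f(\Bn)$ near $f(x)$ — to be the crux of the argument; by contrast, the covering/layer‑cake reduction and the appearance of the weight, which enters only through the harmless comparison $(1-|z_\omega|)^{\alpha/p}\approx(1-|x|)^{\alpha/p}$ available because $\alpha\ge0$, are routine.
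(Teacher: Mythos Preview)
Your reductions to a weak-type inequality and then to the shadow claim $(\star)$ are sound, and the three ``easy'' regimes are handled correctly. But the paper takes a much shorter path that dissolves your remaining case entirely. It proves the \emph{pointwise} bound $M^*_{p,\alpha}f(\omega)\le C\,M_{p,\alpha}f(\omega)$ for every $\omega\in\Sn$, with no covering, no weak-type step, and no appeal to Lemma~\ref{shadowest}. The device is a two-point comparison on the radius: given $x_0\in\Gamma(\omega)$, choose $r_0$ with $x_0\in B_{r_0\omega}$ and set $r_1=(1+r_0)/2$, so $r_1\omega\in\partial B_{r_0\omega}$. Lemma~\ref{qcbasic1} applied at the centre $r_0\omega$ yields both $|f(x_0)-f(r_0\omega)|\le\diam f(B_{r_0\omega})\le C\,d(f(r_0\omega),\partial\Omega)$ and $|f(r_0\omega)-f(r_1\omega)|\ge C^{-1}d(f(r_0\omega),\partial\Omega)$; hence $|f(x_0)-f(r_0\omega)|\le C^2|f(r_0\omega)-f(r_1\omega)|\le C^2\bigl(|f(r_0\omega)|+|f(r_1\omega)|\bigr)$, and the triangle inequality gives $|f(x_0)|\lesssim |f(r_0\omega)|+|f(r_1\omega)|$. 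Since $1-|x_0|\approx 1-r_0\approx 1-r_1$ and $\alpha\ge0$, this already says $|f(x_0)|(1-|x_0|)^{\alpha/p}\lesssim M_{p,\alpha}f(\omega)$. In particular your $(\star)$ holds with $c_0=1$ for every $x$, without any case split.

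Your remaining case, as sketched, has a genuine gap. Lemma~\ref{shadowest} controls $\sigma(\{\omega\in S_x:|f(\omega)-f(x)|>Md_x\})$; on the complement it supplies an \emph{upper} bound $|f(\omega)-f(x)|\le Md_x$, and in the regime $d_x\approx|f(x)|$ this gives only $|f(\omega)|\lesssim|f(x)|$ --- the wrong direction for exhibiting a radial point with $|f|\gtrsim|f(x)|$. You yourself flag the conversion ``upper bound on $d(f(\cdot),f(x))$ into a lower bound for $|f|$'' as the crux, but no such conversion is available here: along the chord in $B_x$ one always has $|f(\cdot)-f(x)|\le\diam f(B_x)\lesssim d_x$, so the shadow lemma never forces the oscillation to be small relative to $|f(x)|$, only not too large. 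The two-point trick is exactly what fills this hole: it trades the uncontrollable quantity $d(f(r_0\omega),\partial\Omega)$ for the difference $|f(r_0\omega)-f(r_1\omega)|$, which the triangle inequality then absorbs into $M_{p,\alpha}f(\omega)$.
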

\begin{proof}

Given $\omega\in\Sn$ and $x_0\in\Gamma(\omega)$, there exists $0<r_0<1$ 
such that $x_0\in B_{r_0\omega}$. By the definition of $B_{r_0\omega}$, we have 
$\frac{1}{2}(1-r_0)\le 1-|x_0|\le 2(1-r_0)$ and 
$|r_0\omega-x_0|\le \frac 1 2 (1-r_0).$ Set $r_1=(1+r_0)/2.$ Then $r_1\omega\in 
\partial B_{r_0\omega}.$
Hence Lemma \ref{qcbasic1} gives that
\begin{equation}\label{blaa}
|f(r_0\omega)-f(x_0)|\le C(n,K)|f(r_0\omega)- f(r_1\omega)|.
\end{equation}
By the triangle inequality
\begin{equation}\label{blaa1}
|f(x_0)|\le |f(r_0\omega)|+|f(r_0\omega)-\partial f(x_0)|
\end{equation}
and
\begin{equation}\label{blaa2}
|f(r_0\omega)- f(r_1\omega)|\le |f(r_0\omega)|+|f(r_1\omega)|.
\end{equation}
By combining \eqref{blaa},\eqref{blaa1},\eqref{blaa2} we
obtain
$$
|f(x_0)|\le C(n,K)\left(|f(r_0\omega)|+|f(r_1\omega)|\right).$$
Since $1-r_1=(1-r_0)/2,$ we conclude that 
$$(M^*_{p,\alpha}f(\omega))^p\le C(n,K,\alpha) (M_{p,\alpha}f(\omega))^p,$$
and \eqref{nontangential maximal estimate} follows.
%
\end{proof}

\vskip 10mm

\section{Proof of Theorem \ref{for all p}}
 We begin with the following lemma. We will employ it in the proof of Lemma \ref{weightedmaxmod} to fix the problem that $M(r,f)(1-r)^{\alpha/p}$ for $\alpha>0$
need not be nondecreasing even though $M(r,f)$ is.

\begin{lemma}\label{NMlemma}
Let $M: [0,1) \rightarrow [0, \infty)$ be increasing and continuous with $M(0) = 0$. Let $p > 0$, $\alpha \geq 0$ and define $N(r) = \sup_{0 \leq t \leq r}M(t)(1-t)^{\alpha/p}$. Then 
\begin{eqnarray}\label{Mpint}
\int_0^1(1-r)^{n-2+\alpha}M^p(r)dr < \infty
\end{eqnarray}
if and only if
\begin{eqnarray}\label{Npint}
\int_0^1(1-r)^{n-2}N^p(r)dr < \infty.
\end{eqnarray}
\end{lemma}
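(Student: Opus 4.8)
The plan is to prove the two implications separately; essentially all the content lies in showing that \eqref{Mpint} implies \eqref{Npint}. The reverse implication is immediate: by definition $N(r)\ge M(r)(1-r)^{\alpha/p}$ for every $r$, hence $(1-r)^{n-2}N^p(r)\ge (1-r)^{n-2+\alpha}M^p(r)$ pointwise on $(0,1)$, and integrating shows that finiteness of \eqref{Npint} forces \eqref{Mpint}. For the forward implication I would work with the function $h(r):=M^p(r)(1-r)^{\alpha}$, so that $N^p(r)=\sup_{0\le t\le r}h(t)$ and the integral in \eqref{Mpint} is exactly $\int_0^1(1-r)^{n-2}h(r)\,dr$.

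The single structural property I would extract from the hypotheses is a one-sided bound on how fast $h$ can decrease: since $M$ is nondecreasing and $\alpha\ge 0$, for $t_0\le r<1$ we have $h(r)\ge h(t_0)\bigl((1-r)/(1-t_0)\bigr)^{\alpha}$, and in particular $h(r)\ge 2^{-\alpha}h(t_0)$ whenever $r\in[t_0,(1+t_0)/2]$. This is precisely what fails for a generic nonnegative function (a downward spike near $0$ breaks the statement), and it is the only place monotonicity of $M$ is used.

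The estimate is then a weighted layer-cake argument with the measure $w(E)=\int_E(1-r)^{n-2}\,dr$ on $(0,1)$. Fix $\lambda>0$, assume $\{t:h(t)>\lambda\}\neq\emptyset$ (otherwise the inequality below is trivial), and set $\tau_\lambda:=\inf\{t:h(t)>\lambda\}$. Since $r\mapsto N^p(r)$ is nondecreasing, $\{r:N^p(r)>\lambda\}$ is, up to its left endpoint, the interval $(\tau_\lambda,1)$, so $w(\{N^p>\lambda\})=(1-\tau_\lambda)^{n-1}/(n-1)$. Choosing $t_0$ slightly larger than $\tau_\lambda$ with $h(t_0)>\lambda$ and applying the decay bound shows $(\tau_\lambda,(1+\tau_\lambda)/2]\subseteq\{r:h(r)>2^{-\alpha}\lambda\}$, whence $w(\{h>2^{-\alpha}\lambda\})\ge (1-2^{1-n})(1-\tau_\lambda)^{n-1}/(n-1)$. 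Comparing the two expressions gives $w(\{N^p>\lambda\})\le (1-2^{1-n})^{-1}w(\{h>2^{-\alpha}\lambda\})$ for every $\lambda>0$. Writing both sides of the target estimate as $\int_0^\infty w(\{\,\cdot\,>\lambda\})\,d\lambda$ (Tonelli), integrating this inequality in $\lambda$, and substituting $\lambda\mapsto 2^{\alpha}\lambda$ on the right yields
\[
\int_0^1(1-r)^{n-2}N^p(r)\,dr\ \le\ \frac{2^{\alpha}}{1-2^{1-n}}\int_0^1(1-r)^{n-2+\alpha}M^p(r)\,dr,
\]
which is the desired implication.

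The step requiring the most care is the inclusion $(\tau_\lambda,(1+\tau_\lambda)/2]\subseteq\{r:h(r)>2^{-\alpha}\lambda\}$: one cannot take $t_0=\tau_\lambda$ directly because $h(\tau_\lambda)>\lambda$ may fail, so one argues with $t_0=\tau_\lambda+\eps$, uses $[\tau_\lambda+\eps,(1+\tau_\lambda)/2]\subseteq[t_0,(1+t_0)/2]$, and lets $\eps\downarrow 0$. Everything else — the decay bound on $h$, the explicit values of $w$ on these intervals, and the Tonelli bookkeeping — is routine; in fact continuity of $M$ and the normalization $M(0)=0$ are not needed for this argument, though they are harmless. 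The argument does use $n-1>0$, which holds since $n\ge 2$.
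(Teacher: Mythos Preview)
Your proof is correct. Both directions are sound, and the care you take with the endpoint $\tau_\lambda$ is well placed.

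Your route differs from the paper's. The paper first reduces to $p=1$ by replacing $M$ with $M^p$, then assumes $N$ is unbounded and chooses a dyadic sequence $r_k=\inf\{r:N(r)=2^{k-1}\}$, using continuity of $N$ to ensure $N(r_k)=M(r_k)(1-r_k)^\alpha=2^{k-1}$ exactly; it then compares the two integrals against the sums $\sum M(r_k)\bigl[(1-r_k)^{n-1+\alpha}-(1-r_{k+1})^{n-1+\alpha}\bigr]$ directly. You instead run a continuous layer-cake comparison: the key inequality $w(\{N^p>\lambda\})\le(1-2^{1-n})^{-1}w(\{h>2^{-\alpha}\lambda\})$ encodes the same structural fact (that $h$ cannot drop faster than $(1-r)^\alpha$) but packages it as a distributional bound rather than a dyadic sum. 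Your argument is a bit cleaner, yields an explicit constant, and---as you note---does not use continuity of $M$ or the normalization $M(0)=0$; the paper's argument genuinely uses continuity to hit the dyadic levels exactly and also splits off the case where $N$ is bounded.
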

\begin{proof}
Since 
\begin{eqnarray*}M^p(r)(1-r)^{n-2+\alpha} &=& M^p(r)(1-r)^{\alpha}(1-r)^{n-2} \\ &\leq&\left(\sup_{0 \leq t \leq r}M^p(t)(1-t)^\alpha\right)(1-r)^{n-2} = N^p(r)(1-r)^{n-2},
\end{eqnarray*}
we have that (\ref{Npint}) { implies} (\ref{Mpint}) for any $p$. 

 For the other direction, we may assume that $N(r)$ is unbounded. Moreover,
if the desired conclusion is true for the case $p=1$ and all $M$ as in our
formulation, then by applying it to
$\widehat{M}(r): = M^p(r)$ we obtain our claim for all $p>0$. 
So it suffices prove that \eqref{Mpint} implies \eqref{Npint} for $p=1$. 

We define a sequence of points $r_k \in [0, 1)$ as follows. Let $r_0 = 0$ and set ${r_k = \inf\{r : N(r) = 2^{k-1} \}}.$ Then the continuity and monotonicity 
of $N(r)$ gives that $2^{k-1}=N(r_k) = M(r_k)(1-r_k)^\alpha$. Hence
\begin{eqnarray*}
\int_0^1 N(r) (1-r)^{n-2}dr &\leq& \sum_{k=0}^\infty N(r_{k+1})\int_{r_k}^{r_{k+1}} (1-r)^{n-2}dr\\ &=& \sum_{k=0}^\infty \frac{N(r_{k+1})}{n-1}\left[ (1-r_k)^{n-1} - (1-r_{k+1})^{n-1}\right] \\ &=& \sum_{k=0}^\infty \frac{M(r_{k+1})(1-r_{k+1})^\alpha}{n-1}\left[ (1-r_k)^{n-1} - (1-r_{k+1})^{n-1}\right]
\\ &=& \frac{2}{n-1}\sum_{k=0}^\infty M(r_k)(1-r_k)^\alpha\left[ (1-r_k)^{n-1} - (1-r_{k+1})^{n-1}\right] \\ &=& \frac{2}{n-1}\sum_{k=0}^\infty M(r_k)\left[ (1-r_k)^{n-1+\alpha} - (1-r_k)^\alpha(1-r_{k+1})^{n-1}\right] \\ &\leq& \frac{2}{n-1}\sum_{k=0}^\infty M(r_k)\left[ (1-r_k)^{n-1+\alpha} - (1-r_{k+1})^{n-1+\alpha}\right]
\end{eqnarray*}
We also have that 
\begin{eqnarray*}
\int_0^1(1-r)^{n-2+\alpha}M(r)dr &\geq& \sum_{k=0}^\infty M(r_k) \int_{r_k}^{r_{k+1}}(1-r)^{n-2+\alpha}dr \\&=&  \sum_{k=0}^\infty \frac{M(r_k)}{n-1+\alpha} \left[(1-r_k)^{n-1+\alpha} - (1-r_{k+1})^{n-1+\alpha} \right]. 
\end{eqnarray*}
\noindent The desired implication follows.
\end{proof}

We continue with a result on Carleson measures.

\begin{lemma}\label{carlesonlemma}
Let $f: \Bn \rightarrow \R^n$ be a quasiconformal mapping, 
 $0 < p < \infty$, $\alpha \geq 0$ and { let} $\mu$ { be} a Carleson measure on $\Bn$. Then there is a constant $C = C(n,K, C_\mu)$ such that
\begin{eqnarray*}
\int_{\Bn} |f(x)|^p(1-|x|)^\alpha d\mu \leq C \int_{\Sn} (Mf_{p,\alpha})^p d\sigma(\omega).
\end{eqnarray*}
\end{lemma}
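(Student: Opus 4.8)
The plan is to dominate the left-hand side by a sum over a Whitney-type decomposition of $\Bn$ into the balls $B_x$ and then convert this into an integral over $\Sn$ of the radial maximal function, using the Carleson property of $\mu$ to control the measure of the shadows. First I would fix a Whitney-type cover: choose a countable collection of points $x_j\in\Bn$ such that the balls $B_{x_j}$ have bounded overlap and $\bigcup_j B_{x_j}=\Bn$ (for instance, take $x_j$ from dyadic annuli $1-|x|\approx 2^{-k}$). On each $B_{x_j}$ the quantities $|f(x)|$, $(1-|x|)$ and the radial coordinate vary only by constants comparable to $C(n,K)$: the first by Lemma~\ref{qcbasic1} (combined with the triangle inequality, so that $|f(x)|\approx |f(x_j)|$ up to additive/multiplicative constants once $|f(x_j)|$ is large, and trivially otherwise), and the second by the definition of $B_x$. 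Hence
$$
\int_{\Bn}|f(x)|^p(1-|x|)^\alpha\,d\mu
\le C\sum_j \bigl(|f(x_j)|+c_j\bigr)^p(1-|x_j|)^\alpha\,\mu(B_{x_j}),
$$
where the additive term $c_j$ absorbs the boundedness of $f$ near points where $f$ is small and can be handled separately (it contributes a convergent geometric series over the dyadic annuli).

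Next I would estimate $\mu(B_{x_j})$. Since $B_{x_j}=B(x_j,(1-|x_j|)/2)\subset B(\omega_j,C(1-|x_j|))$ for any $\omega_j\in S_{x_j}$, the Carleson condition gives $\mu(B_{x_j})\le C_\mu\,(1-|x_j|)^{n-1}$, which is comparable to $\sigma(S_{x_j})$ by the standard shadow estimate $\sigma(S_x)\approx (1-|x|)^{n-1}$. Therefore
$$
\int_{\Bn}|f(x)|^p(1-|x|)^\alpha\,d\mu
\le C\sum_j |f(x_j)|^p(1-|x_j|)^\alpha\,\sigma(S_{x_j}) + (\text{convergent remainder}).
$$
Now for every $\omega\in S_{x_j}$ we have $x_j\in\Gamma(\omega)$, so $|f(x_j)|^p(1-|x_j|)^\alpha\le (M^*f_{p,\alpha}(\omega))^p$; integrating this over $S_{x_j}$ and summing, the bounded-overlap property of the shadows (inherited from that of the balls $B_{x_j}$ within each dyadic annulus, which is all that is needed since shadows from different annuli may overlap but only boundedly many times per annulus and there is the decay in the remainder) yields
$$
\sum_j |f(x_j)|^p(1-|x_j|)^\alpha\,\sigma(S_{x_j})
\le C\int_{\Sn}(M^*f_{p,\alpha}(\omega))^p\,d\sigma(\omega).
$$
Finally, Lemma~\ref{maximal function estimate} replaces $M^*f_{p,\alpha}$ by $Mf_{p,\alpha}$ at the cost of another constant $C(n,K,p,\alpha)$, giving the claimed bound with $C=C(n,K,C_\mu,p,\alpha)$.

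The main obstacle I anticipate is the bookkeeping around the additive constant in $|f(x)|\approx|f(x_j)|$: Lemma~\ref{qcbasic1} controls $|f(y)-f(x)|$ in terms of $d(f(x),\partial\Omega)$, which is comparable to $|f(x)|$ only when $|f(x)|$ is large relative to $\dist(0,\partial\Omega)$, so one must split the cover into the part where $f$ is bounded (finitely many annuli, or a tail where the geometric decay $(1-|x_j|)^\alpha\to 0$ with $\alpha\ge 0$... actually with $\alpha\ge0$ there is no decay, so instead one uses that on the bounded part $d\mu$-mass is finite by the Carleson bound and $|f|$ is bounded there) and the part where $|f|$ is large. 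An alternative, cleaner route that avoids this split entirely is to run the argument with $d(f(x),\partial\Omega)$ in place of $|f(x)|$ throughout — Lemma~\ref{qcbasic1} then applies directly with purely multiplicative constants — and only at the very end note $d(f(x),\partial\Omega)\le|f(x)|+\dist(0,\partial\Omega)$, handling the constant tail by the finiteness of $\mu(\Bn\cap\{|x|<1/2\})$ together with the fact that $(M f_{p,\alpha})^p$ is bounded below on a positive-measure set whenever $f$ is nonconstant; I would adopt this second route in the writeup.
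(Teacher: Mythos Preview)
Your argument has a genuine gap at the summation step
\[
\sum_j |f(x_j)|^p(1-|x_j|)^\alpha\,\sigma(S_{x_j})
\le C\int_{\Sn}(M^*f_{p,\alpha}(\omega))^p\,d\sigma(\omega).
\]
For this to follow from the pointwise bound $|f(x_j)|^p(1-|x_j|)^\alpha\le (M^*f_{p,\alpha}(\omega))^p$ for $\omega\in S_{x_j}$, you need $\sum_j\chi_{S_{x_j}}(\omega)\le C$. But the shadows coming from a Whitney cover of all of $\Bn$ have \emph{unbounded} overlap: any fixed $\omega\in\Sn$ lies in roughly one shadow from each dyadic annulus $1-|x|\approx 2^{-k}$, so the overlap count is infinite. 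Your parenthetical remark (``only boundedly many per annulus and there is decay in the remainder'') does not repair this, since there is no reason for $|f(x_j)|^p(1-|x_j|)^\alpha$ to decay along the radial chain of Whitney balls over $\omega$; indeed these values can all be comparable to $(M^*f_{p,\alpha}(\omega))^p$. The alternative route via $d(f(x),\partial\Omega)$ that you suggest at the end addresses only the additive/multiplicative issue in Lemma~\ref{qcbasic1}, not this overlap problem.

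The paper avoids the difficulty by working distributionally. Set $g(x)=|f(x)|(1-|x|)^{\alpha/p}$, let $E_\lambda=\{x\in\Bn: g(x)>\lambda\}$ and $U_\lambda=\{\omega\in\Sn: M^*f_{p,\alpha}(\omega)>\lambda\}$, and perform a Whitney decomposition of the open set $U_\lambda\subset\Sn$ into caps $S_{x_k}$ (so bounded overlap is automatic). One then checks $E_\lambda\subset\bigcup_k B(x_k/|x_k|,C(1-|x_k|))$, whence the Carleson condition gives $\mu(E_\lambda)\le C\sigma(U_\lambda)$; Cavalieri's formula and Lemma~\ref{maximal function estimate} finish the proof. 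Note that this is the standard Carleson embedding and uses no quasiconformal regularity of $f$ at all --- the issue you spent most of your plan on (comparability of $|f|$ on $B_{x_j}$) never arises.
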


\begin{proof}
By Lemma~\ref{maximal function estimate} it suffices to show that there 
exists a constant $C(n, K)$ such that
\begin{eqnarray*}
\int_{\Bn} |f(x)|^p(1-|x|)^\alpha d\mu \leq C \int_{\Sn} (M^*f_{p,\alpha})^p d\sigma(\omega).
\end{eqnarray*}
For each $\lambda > 0$, set
$$
E_\lambda = \{x\in \Bn : |f(x)|(1-|x|)^{\alpha/p} > \lambda \} 
$$
and
$$
U_\lambda = \left\{\omega\in \Sn : \sup_{x\in \Gamma(\omega)}|f(x)|(1-|x|)^{\alpha/p} > \lambda \right\}. 
$$
Recall the definion of the shadows $S_x$ from Subsection \ref{cones}. They
are spherical caps.
We can decompose $U_\lambda$ into a Whitney-type decomposition of 
spherical caps. That is, we can write, 
$$
U(\lambda) = \bigcup_{k=1}^\infty S_{x_k},
$$
where any $\omega \in U_\lambda$ belongs to no more than 
$N(n)$ spherical caps $S_{x_k}$ and 
$$d(S_{x_k}, \partial U(\lambda)) \approx \text{diam}(S_{x_k}) 
\approx (1- |x_k|),$$
with universal constants. If $x \in E_\lambda$ and $x\neq 0$, then 
$M^*f_{p,\alpha}(\omega)>\lambda$ whenever $x\in \Gamma_{\omega}.$ Moreover, 
$\frac{x}{|x|}\in S_{x_k}$ for some $k.$ Thus by the definition of 
$S_k$ and the properties of the Whitney-type decomposition 
there exists a universal constant $C$ such that $1-|x|\le C(1-|x_k|).$
Hence $E_\lambda \subset \cup_{k=1}^\infty B(x_k/|x_k|, C(1-|x_k|))$. Therefore
\begin{eqnarray*}
\mu(E_\lambda) &\leq& \sum_{k=1}^\infty \mu(B(x_k/|x_k|, C(1-|x_k|)) \cap \Bn) \\ &\leq& C(n, C_\mu) \sum_{k=1}^\infty (1-|x_k|)^{n-1} \\ &\leq& C(n, C_\mu)\sum_{k=1}^\infty \sigma (S_{x_k}) \leq C(n, C_\mu)\sigma(U_\lambda).
\end{eqnarray*}
This together with the Cavalieri formula gives
\begin{eqnarray*}
\int_{\Bn} |f(x)|^p(1-|x|)^\alpha d\mu &=& \int_0^\infty p\lambda^{p-1}\mu(E_\lambda)d\lambda \\&\leq& C(n, C_\mu) \int_0^\infty p\lambda^{p-1}\sigma(U_\lambda)d\lambda \\ &=& C(n, C_\mu) \int_{\Sn} \sup_{x\in \Gamma(\omega)}|f(x)|^p(1-|x|)^\alpha d\sigma(\omega)\\
 &=&  C \int_{\Sn} (M^*f_{p,\alpha}(\omega))^p d\sigma(\omega).
\end{eqnarray*}
%
\end{proof}

We are now ready to prove a maximal characterization for $H^p_{\alpha}.$
By Lemma~\ref{maximal function estimate} we could also replace the radial maximal function by the nontangential one.

\begin{lemma}\label{weightedmaxmod}
Let $f: \B^n \rightarrow \R^n$ be a quasiconformal mapping, 
$0 < p < \infty$ and $\alpha \geq 0$. Then
\begin{eqnarray}\label{supint}
\int_{\Sn} \sup_{0 < r < 1}|f(r\omega)|^p(1-r)^\alpha d\sigma(\omega)< \infty
\end{eqnarray}
if and only if
\begin{eqnarray}\label{Mineq}
\int_0^1 (1-r)^{n-2+\alpha}M^p(r,f) dr < \infty.
\end{eqnarray}
\end{lemma}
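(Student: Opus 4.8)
The plan is to reduce Lemma~\ref{weightedmaxmod} to the combination of Lemma~\ref{NMlemma} and Lemma~\ref{carlesonlemma}, using the Carleson measures supplied by Lemma~\ref{carlesonmeasure}. First I would dispose of the trivial possibility that $\Omega = f(\B^n)$ is bounded (equivalently $f(x)$ stays bounded, in which case both sides of the claimed equivalence are clearly finite) and also the possibility that $0 \in \Omega$, which only shifts $f$ by a constant and does not affect the maximal functions or $M(r,f)$ up to additive constants; so I may assume $f(x) \neq 0$ for all $x \in \B^n$. Write $N(r) = \sup_{0 \le t \le r} M(t,f)(1-t)^{\alpha/p}$, which is legitimate since $M(\cdot,f)$ is continuous and increasing (it is the sup of $|f|$ on the closed ball $\overline{B(0,r)}$, which grows by the maximum principle for quasiconformal maps / the distortion estimates). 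By Lemma~\ref{NMlemma} applied to $M(r,f)$, the right-hand side \eqref{Mineq} is equivalent to $\int_0^1 (1-r)^{n-2} N^p(r)\,dr < \infty$.

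Next I would relate $N$ to the weighted radial maximal function of Lemma~\ref{maximal function estimate}. Note $N(r) = \sup_{0\le t\le r}\sup_{\omega\in\Sn}|f(t\omega)|(1-t)^{\alpha/p}$, so $N(r) \to \sup_{\omega} Mf_{p,\alpha}(\omega)$ as $r\to 1$, but more usefully one has the pointwise-in-$\omega$ comparison that makes the integral identity work: I would argue that $\int_0^1(1-r)^{n-2}N^p(r)\,dr$ is comparable to $\int_{\Sn}(Mf_{p,\alpha}(\omega))^p\,d\sigma(\omega)$. The direction $\gtrsim$ here needs the Whitney/shadow machinery (exactly as in Lemma~\ref{carlesonlemma}): for a level $\lambda$, the set where $|f(r\omega)|(1-r)^{\alpha/p} > \lambda$ for \emph{some} $\omega$ forces $N(r) > \lambda$, and one converts the one-dimensional measure $\{r : N(r) > \lambda\}$ into $\sigma\{Mf_{p,\alpha} > \lambda\}$ via a covering of $U_\lambda$ by spherical caps of comparable diameter, then integrates over $\lambda$ with the Cavalieri formula. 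The direction $\lesssim$ is the easy one, since $|f(r\omega)|(1-r)^{\alpha/p} \le N(r)$ pointwise for each fixed $\omega$, but to recover the one-dimensional weight $(1-r)^{n-2}$ one again uses that $\{r : N(r) > \lambda\}$ is an interval $(\rho_\lambda,1)$ and $\int_{\rho_\lambda}^1 (1-r)^{n-2}\,dr \approx (1-\rho_\lambda)^{n-1} \approx \sigma(\text{cap of radius } 1-\rho_\lambda)$; comparing this cap to $U_\lambda$ gives the bound. Actually the cleanest route is: \eqref{Mineq} $\Leftrightarrow$ $\int_0^1(1-r)^{n-2}N^p(r)\,dr<\infty$ $\Leftrightarrow$ \eqref{supint}, where the last equivalence is a direct computation using the Cavalieri formula on both sides and the elementary fact that $\{\,r : N(r) > \lambda\,\}$ and $\{\,\omega : Mf_{p,\alpha}(\omega) > \lambda\,\}$ have comparable ``sizes'' under the weights $(1-r)^{n-2}dr$ and $d\sigma$ respectively — the comparability being precisely the shadow/Carleson-measure estimate of Lemma~\ref{carlesonmeasure} (with the measure $|Df|^p|f|^{-p}(1-|x|)^{p-1}dx$ when $p < n$, and for general $p$ one splits off a power and uses that product of a Carleson measure with a bounded weight is Carleson, or applies Lemma~\ref{carlesonlemma} directly).

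Assembling: \eqref{supint} is $\int_{\Sn}(Mf_{p,\alpha})^p\,d\sigma$. The inequality $\int_{\Sn}(Mf_{p,\alpha})^p\,d\sigma \lesssim \eqref{Mineq}$: choose the Carleson measure $d\mu = |Df(x)|^{\min(p,n)/2}\cdots$ — more simply, use Lemma~\ref{carlesonlemma} with a Carleson measure $\mu$ for which $\int_{\Bn}|f|^p(1-|x|)^\alpha\,d\mu$ dominates $\int_{\Sn}(Mf_{p,\alpha})^p$; but the honest statement is the reverse, so instead I would directly prove $\eqref{supint}\Rightarrow\eqref{Mineq}$ by observing $M^p(r,f)(1-r)^{\alpha} = \sup_\omega |f(r\omega)|^p(1-r)^\alpha \le \sup_\omega \sup_{0<s<1}|f(s\omega)|^p(1-s)^\alpha$, so if the right side of \eqref{supint} were replaced by a sup it would be trivial; the integral version requires the shadow estimate Lemma~\ref{shadowest} to control how large a portion of the sphere ``sees'' a given large value, exactly in the style of \cite{AK}. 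Conversely $\eqref{Mineq}\Rightarrow\eqref{supint}$ follows from $\eqref{Mineq}\Leftrightarrow\int_0^1(1-r)^{n-2}N^p(r)\,dr<\infty$ and the pointwise bound $Mf_{p,\alpha}(\omega)^p \lesssim$ (a suitable tail of the $N$-integral) obtained via Lemma~\ref{qcbasic1}. \textbf{The main obstacle} I anticipate is the careful two-sided comparison between the one-dimensional quantity $\int_0^1 (1-r)^{n-2}N^p(r)\,dr$ and the spherical integral $\int_{\Sn}(Mf_{p,\alpha})^p\,d\sigma$: getting the ``$\gtrsim$'' direction right requires the Whitney decomposition of $U_\lambda$ together with Lemma~\ref{shadowest} (so that a large value of $|f(x)|$ at an interior point $x$ is inherited, up to constants, on a definite fraction — not merely a null set — of the shadow $S_x$), and this is exactly where the quasiconformal distortion estimates and the $\alpha \ge 0$ hypothesis are used. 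Everything else is bookkeeping with the Cavalieri formula.
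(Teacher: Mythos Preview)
Your framework is right --- reduce via Lemma~\ref{NMlemma} to $\int_0^1(1-r)^{n-2}N^p(r)\,dr$ and then compare level sets by Cavalieri --- but there is a genuine gap in the direction $\eqref{Mineq}\Rightarrow\eqref{supint}$. The crux is the estimate $\sigma(\{\omega: Mf_{p,\alpha}(\omega)>\lambda\}) \le C(1-r_\lambda)^{n-1}$, where $r_\lambda$ is chosen so that $N(r_\lambda)=\lambda/2$. You propose to obtain this from Lemma~\ref{shadowest} together with a Whitney decomposition, but Lemma~\ref{shadowest} is a \emph{local} statement about a single shadow $S_x$: it bounds the portion of $S_x$ where $f(\omega)$ is far from $f(x)$, relative to $d(f(x),\partial\Omega)$. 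It does not give an upper bound on the $\sigma$-measure of a superlevel set of the radial maximal function, and a Whitney decomposition of $U_\lambda$ will not manufacture one --- you would be summing the wrong direction of inequality. The paper's tool here is the \emph{conformal modulus of curve families}: the radial segments from $r_\lambda E$ to $E := \{Mf_{p,\alpha}>\lambda\}$ have modulus $\approx \sigma(E)(1-r_\lambda)^{1-n}$, while their images under $f$ all join the spheres of radii $(\lambda/2)(1-r_\lambda)^{-\alpha/p}$ and $\lambda(1-r_\lambda)^{-\alpha/p}$ (this is exactly where the choice of $r_\lambda$ and the hypothesis $\alpha\ge0$ are used), so the image family has modulus $\le C(n)$. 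Quasi-invariance of modulus then yields $\sigma(E)\lesssim(1-r_\lambda)^{n-1}$, after which your Cavalieri bookkeeping goes through. You never invoke modulus, and without it (or an equivalent global estimate) this direction does not close.

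For $\eqref{supint}\Rightarrow\eqref{Mineq}$ your detour through Lemma~\ref{carlesonmeasure} is both unnecessary and restricted to $p<n$; you also correctly notice that Lemma~\ref{carlesonlemma} points the wrong way for what you first try. The paper's argument is much simpler and works for all $p$: pick maximizers $x_k$ with $|x_k|=1-2^{-k}$ and $|f(x_k)|=M(1-2^{-k},f)$; then $\int_0^1(1-r)^{n-2+\alpha}M^p(r,f)\,dr \le C\sum_k (1-|x_k|)^{n-1+\alpha}|f(x_k)|^p = C\int_{\Bn}|f|^p(1-|x|)^\alpha\,d\mu$ for the discrete measure $\mu=\sum_k(1-|x_k|)^{n-1}\delta_{x_k}$, which is trivially Carleson. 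Lemma~\ref{carlesonlemma} then applies directly.
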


\begin{proof}
Assume $f(0) = 0$ and suppose (\ref{Mineq}) holds.  
Set $N(r,f) = \sup_{0 \leq t \leq r}M(t,f)(1-t)^{\frac{\alpha}{p}}$. 
Then by Lemma \ref{NMlemma} we have that\begin{eqnarray*}
\int_0^1 N^p(r,f)(1-r)^{n-2}dr < \infty.
\end{eqnarray*}
Recall our notation 
$Mf_{p,\alpha}(\omega) = \sup_{0 < r < 1}|f(r\omega)|(1-r)^{\frac{\alpha}{p}}$. Now
\begin{eqnarray}\label{integraali}
\int_{\Sn} \sup_{0 < r < 1}|f(r\omega)|^p(1-r)^\alpha d\sigma(\omega) &=& \int_{\Sn} Mf^p_{p,\alpha}(\omega) d\sigma(\omega)\nonumber\\ &=& \int_0^\infty p \lambda^{p-1} \sigma(\{\omega \in \Sn : Mf_{p, \alpha}(\omega) > \lambda\})d\lambda.
\end{eqnarray}
Fix $\lambda > 0$ and let 
$E = \{\omega \in \Sn : Mf_{p, \alpha}(\omega) > \lambda\}$. 
Suppose that $E$ is nonempty. Then there is $\omega \in \Sn$ and $r\in (0,1)$ such that $N(r,f) = \frac{\lambda}{2}$, since $N(r,f)$ is continuous. Our function $N(r,f)$ is also nondecreasing and we let 
\begin{equation}\label{rlamda}
r_\lambda = \max\{r : N(r,f) = \lambda/2\}.
\end{equation}
 We may assume that $\lambda$ is so large that $1/2 <r_\lambda<1.$ 
Let $\Gamma_E$ be the family of radial line segments connecting $B(0,r_\lambda)$ and $E \subset \Sn$. Then 
$$
\textnormal{Mod}(\Gamma_E) = \sigma(E)(\ln(1/r_\lambda))^{1-n} \geq \sigma(E)2^{1-n}(1-r_\lambda)^{1-n}.
$$
By the definitions of $E$ and $r_\lambda$, for any $\gamma \in \Gamma_E$, the image curve $f(\gamma)$ connects $B(0, (\lambda/2)(1-r_\lambda)^{-\alpha/p})$ to $\R^n\setminus B(0, \lambda(1-r_\lambda)^{-\alpha/p})$, and therefore the modulus of the image family $f\Gamma_E$ satisfies 
$$\textnormal{Mod}(f\Gamma_E) \leq \sigma(\Sn)(\ln 2)^{1-n}.$$ 
{ By }combining the { above} two estimates and using the quasi-invariance of the modulus, we arrive at the upper bound
$$
\sigma(E) \leq C(n,K)(1-r_\lambda)^{n-1}.
$$

In order to prove \eqref{supint} we may assume that $Mf_{p, \alpha}$ is unbounded
on $\Sn.$ Define a measure $\nu$ on $[0,1]$ by  setting  
$d\nu = (1-r)^{n-2}dr$ and recall the definition of $r_{\lambda}$ from
\eqref{rlamda}.
Now 
\begin{eqnarray*}
\nu(\{r : N(r, f) > \lambda/2 \}) =\int_{r_\lambda}^{1}(1-r)^{n-2}dr
 =\frac{(1-r_\lambda)^{n-1}}{n-1}.
\end{eqnarray*}
 Thus
\begin{gather*}
 \int_0^\infty p \lambda^{p-1} \sigma(\{\omega \in \Sn : Mf_{p, \alpha}(\omega) > \lambda\})d\lambda \\ \leq \sigma(\Sn)2^pN^p(1/2,f)+  \int_{2N(1/2,f)}^\infty p \lambda^{p-1} \sigma(\{\omega \in \Sn : Mf_{p, \alpha}(\omega) > \lambda\})d\lambda \\ \leq \sigma(\Sn)2^pN^p(1/2,f)+ C(n,K,p)\int_0^\infty \lambda^{p-1}(1-r_\lambda)^{n-1}d\lambda \\ \le \sigma(\Sn)2^pN^p(1/2,f)+ C(n,K,p) \int_0^\infty \lambda^{p-1}\int_{\{r:N(r, f) > \lambda/2 \}} (1-r)^{n-2}drd\lambda \\ \le \sigma(\Sn)2^pN^p(1/2,f) + C(n,K,p)\int_0^1(1-r)^{n-2}N^p(r,f) dr < \infty,
\end{gather*}
and hence \eqref{supint} follows by \eqref{integraali}. 

In the case 
$f(0)\neq 0,$ we consider the quasiconformal mapping $g$ defined
by setting $g(x)=f(x)-f(0).$ Then \eqref{Mineq} also holds with $f$ replaced 
by $g,$ and by the first part of our proof \eqref{supint} follows with $f$ 
replaced by $g.$ We conclude with \eqref{supint} via the triangle inequality.

For the other direction, suppose that (\ref{supint}) holds, set 
$r_k: = 1-2^{-k}$ and choose $x_k \in \Bn$ so that $|x_k| = r_k$ and 
$|f(x_k)| = M(r_k, f)$. Then
\begin{gather*}
\int_0^1 (1-r)^{n-2+\alpha}M^p(r,f)dr \leq 2^n\sum_{k=1}^\infty (2^{-k})^{n-1+\alpha}M^p(r_k,f) = 2^n\int_{\Bn}|f(x)|^p(1-|x|)^\alpha d\mu,
\end{gather*}
where $d\mu = \sum_{k=1}^\infty (1-|x|)^{n-1}\delta_{x_k}.$ Notice that $\mu$ is a
Carleson measure.
Hence Lemma \ref{carlesonlemma} gives us that
\begin{eqnarray*}
\int_0^1 (1-r)^{n-2+\alpha}M^p(r,f)dr \leq C(n,K, C_\mu) \int_{\Sn} \sup_{0 < r < 1}|f(r\omega)|^p(1-r)^\alpha d\sigma(\omega).
\end{eqnarray*}
\end{proof}

We continue with the following estimate whose proof is based on a 
good-$\lambda$ inequality.

{\begin{lemma}
\label{goodlambdaie}
Let $f:\Bn \rightarrow \R^n$ be a $K$-quasiconformal mapping, $0<p<\infty$, and $\alpha\ge0$. Let $$v(\omega)=\sup_{x\in\Gamma(\omega)}d(f(x),\partial f(\Bn))(1-|x|)^{\alpha/p}\in L^p(\Sn).$$
There exists $C=C(n,K,p,\alpha)$ such that
$$\int_{\Sn}{ M}f_{p,\alpha}^p(\omega)d\sigma(\omega)
\le C\int_{\Sn}v^p(\omega)d\sigma(\omega).$$
\end{lemma}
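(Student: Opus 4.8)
The plan is to prove the pointwise-in-$\lambda$ comparison
\[
\sigma\bigl(\{\omega:Mf_{p,\alpha}(\omega)>\lambda\}\bigr)\le C\,\sigma\bigl(\{\omega: v(\omega)>\eps\lambda\}\bigr)+\sigma\bigl(\{\omega: \text{(small exceptional set)}\}\bigr)
\]
via a good-$\lambda$ inequality and then integrate against $p\lambda^{p-1}\,d\lambda$. Since $Mf_{p,\alpha}$ and $v$ differ only by the value $f(0)$ versus $d(f(0),\partial f(\Bn))=|f(0)|$ — more precisely $|f(x)|\le |f(0)|+d(f(x),\partial f(\Bn))$ is false in general, so instead one should use $|f(x)|\le |f(0)|+|f(x)-f(0)|$ and $|f(x)-f(0)|\asymp d(f(x),\partial f(\Bn))$ only up to a bounded factor near the boundary — I would first reduce to $f(0)=0$ by replacing $f$ with $g=f-f(0)$, noting $\partial g(\Bn)=\partial f(\Bn)-f(0)$, $d(g(x),\partial g(\Bn))=d(f(x),\partial f(\Bn))$, and $Mg_{p,\alpha}\le Mf_{p,\alpha}+\text{const}$ with the reverse bound symmetric. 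After this reduction $|f(x)|\le C\,d(f(x),\partial f(\Bn))$ is the key geometric input once $|x|$ is bounded away from $0$, which follows from Lemma \ref{qcbasic1} applied with $x$ replaced by a point on the segment from $0$: $d(f(x),\partial f(\Bn))\ge |f(x)-f(0)|/C$ fails directly, but $|f(x)|=|f(x)-f(0)|\le C\,d(f(x),\partial f(\Bn))$ does hold because $0\in\Bn$ and one can chain the estimate in Lemma \ref{qcbasic1} along a Harnack-type chain of balls $B_{t\,x/|x|}$ from near $0$ to $x$. The hypothesis $v\in L^p(\Sn)$ is given, so this is what we must exploit.

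The core of the argument is the good-$\lambda$ inequality. For $\lambda>0$ set $E_\lambda=\{\omega: Mf_{p,\alpha}(\omega)>\lambda\}$. For $\omega\in E_\lambda$ there is $r=r(\omega)\in(0,1)$ with $|f(r\omega)|(1-r)^{\alpha/p}>\lambda$; let $x_\omega=r\omega$. I would perform a Whitney-type / Vitali covering of $E_\lambda$ by shadows $S_{x_k}$ of such points $x_k$, chosen maximal so that $S_{x_k}$ have bounded overlap and $d(S_{x_k},\partial E_\lambda)\approx\diam S_{x_k}\approx 1-|x_k|$ — exactly the decomposition used in the proof of Lemma \ref{carlesonlemma}. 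On each cap, apply Lemma \ref{shadowest} with the point $x_k$, the dilation parameter $M$, and exponent $\alpha\ge0$: outside an exceptional subcap of $S_{x_k}$ of measure $\le C\sigma(S_{x_k})/(\log M)^{n-1}$, we have $d(f(\omega'),f(x_k))(1-|x_k|)^{\alpha/p}\le M\,d(f(x_k),\partial f(\Bn))(1-|x_k|)^{\alpha/p}$ for $\omega'$ in the subcap, hence $|f(\omega')|(1-|x_k|)^{\alpha/p}\le |f(x_k)|(1-|x_k|)^{\alpha/p}+M\,d(f(x_k),\partial f(\Bn))(1-|x_k|)^{\alpha/p}$. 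Using $|f(x_k)|\le C\,d(f(x_k),\partial f(\Bn))$ (the reduction above) and $(1-|x_k|)\approx (1-|x'|)$ for $x'$ on the radial segment to $\omega'$ inside the shadow — so that $d(f(x_k),\partial f(\Bn))(1-|x_k|)^{\alpha/p}\le C\,v(\omega')$ by the definition of $v$ (the point $x_k\in\Gamma(\omega')$ up to comparable scale) — we get $Mf_{p,\alpha}$ controlled at scale $\lambda$ by $(1+M)\cdot(\text{sup of }v)$. Choosing $M=M(\eps)$ appropriately, this yields
\[
\sigma(E_\lambda)\le C\sum_k\sigma(S_{x_k})\le C\sum_k\Bigl(\sigma(\{\omega'\in S_{x_k}: v(\omega')>c\lambda/M\})+\tfrac{\sigma(S_{x_k})}{(\log M)^{n-1}}\Bigr).
\]

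To close, I would use the bounded overlap to pass from $\sum_k\sigma(\{\omega'\in S_{x_k}:v>c\lambda/M\})$ to $C\,\sigma(\{v>c\lambda/M\})$, and for the error terms note $\sum_k\sigma(S_{x_k})\le C\sigma(E_{\lambda/C'})$ (each $S_{x_k}$ is essentially contained in a fixed dilate of $E_\lambda$, or one simply runs the iteration at a slightly smaller level). This gives the good-$\lambda$ inequality
\[
\sigma(\{Mf_{p,\alpha}>\lambda\})\le \tfrac{C}{(\log M)^{n-1}}\,\sigma(\{Mf_{p,\alpha}>\lambda/C'\})+C\,\sigma(\{v>c\lambda/M\}).
\]
Multiply by $p\lambda^{p-1}$, integrate over $\lambda\in(0,\infty)$, change variables in each term, and choose $M$ large enough that $C(C')^p/(\log M)^{n-1}<\tfrac12$; the first term is absorbed into the left side (legitimate because $Mf_{p,\alpha}\in L^p$ is a priori finite — here one needs a truncation argument, integrating over $(0,T)$ and letting $T\to\infty$, using $v\in L^p$ to guarantee finiteness at each stage), leaving $\int_{\Sn}Mf_{p,\alpha}^p\le C\int_{\Sn}v^p$. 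The main obstacle I anticipate is the bookkeeping that identifies $d(f(x_k),\partial f(\Bn))(1-|x_k|)^{\alpha/p}$ with (a constant times) $v(\omega')$ uniformly over the subcap: one must verify that $x_k$ lies in a cone $\Gamma(\omega')$ of comparable aperture for every $\omega'$ in the subcap $S_{x_k}$ and that $1-|x_k|\approx 1-|x'|$ there, so that the definition of $v$ as a nontangential sup genuinely dominates the relevant quantity — this is where the Whitney geometry of the shadows (the relation $d(S_{x_k},\partial E_\lambda)\approx\diam S_{x_k}\approx 1-|x_k|$) does the real work. A secondary subtlety is making sure the exceptional sets from Lemma \ref{shadowest} across different $k$ still only contribute, after summation and bounded overlap, a factor $(\log M)^{1-n}$ times the $L^p$ norm we are estimating, so that the absorption step is valid.
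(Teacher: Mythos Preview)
Your approach is broadly a good-$\lambda$ argument, as is the paper's, but there is a genuine gap at the step where you claim $|f(x_k)|\le C\,d(f(x_k),\partial f(\Bn))$ after reducing to $f(0)=0$. This inequality is false in general, and the Harnack-chain justification you sketch does not produce a uniform constant: chaining Lemma~\ref{qcbasic1} along balls $B_{t_j\omega}$ from $0$ to $x$ yields at best $|f(x)|\le C\sum_j d(f(t_j\omega),\partial\Omega)$, where the number of terms is $\approx\log\frac{1}{1-|x|}$ and the individual distances are not comparable to $d(f(x),\partial\Omega)$. For a concrete counterexample take $f$ a conformal map of the disk onto the strip $\{|\mathrm{Im}\,w|<1\}$ with $f(0)=0$: as $x\to\pm 1$ along the real axis one has $|f(x)|\to\infty$ while $d(f(x),\partial\Omega)\le 1$. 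Without this step you cannot bound $|f(x_k)|(1-|x_k|)^{\alpha/p}$ by a constant times $v(\omega')$, and the subsequent estimate $Mf_{p,\alpha}\lesssim (1+M)v$ on the good subcap never gets off the ground.

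The paper circumvents this by a genuinely different mechanism. It Whitney-decomposes the \emph{nontangential} level set $U(\lambda)=\{\omega:M^*f_{p,\alpha}(\omega)>\lambda\}$ (not your $E_\lambda$), so each cap $S_{x_k}$ sits at distance $\approx\mathrm{diam}(S_{x_k})$ from $\partial U(\lambda)$. One then picks $\bar\omega\in\partial U(\lambda)$ near $S_{x_k}$; since $M^*f_{p,\alpha}(\bar\omega)\le\lambda$, taking $\bar x_k\in\Gamma(\bar\omega)$ with $|\bar x_k|=|x_k|$ and applying Lemma~\ref{qcbasic1} gives $|f(x_k)-f(\bar x_k)|\le C\,d(f(x_k),\partial\Omega)$, whence $|f(x_k)|(1-|x_k|)^{\alpha/p}\le\lambda+C\gamma$ on the set where $v\le\gamma$. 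The control on $|f(x_k)|$ thus comes from the \emph{level-set geometry} (the nearby point where the maximal function is small), not from any universal distortion inequality. A second layer you are missing is that for $\omega\in S_{x_k}$ with $Mf_{p,\alpha}(\omega)>L\lambda$ one must also pick $r_\omega$ with $|f(r_\omega\omega)|(1-r_\omega)^{\alpha/p}$ large, run a Besicovitch subcover by the caps $S_{r_j\omega_j}$, and apply Lemma~\ref{shadowest} \emph{twice} --- once centered at $r_j\omega_j$ and once at $x_k$ --- to cover the two ways the radial limit $f(\omega)$ can be far from these anchors. Your single application of Lemma~\ref{shadowest} at $x_k$ does not capture this.
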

\begin{proof}
Recall that 
\begin{eqnarray*}
M^*f_{p,\alpha}(\omega)=\sup_{x\in\Gamma(\omega)}|f(x)|(1-|x|)^{\alpha/p}\\
\text{and }\,\,Mf_{p,\alpha}(\omega)=\sup_{0<r<1}|f(r\omega)|(1-r)^{\alpha/p}.
\end{eqnarray*}
Let $L>2.$ By the Cavalieri formula
\begin{equation}\label{intergal of f(w)}
\int_{\Sn}Mf_{p,\alpha}^p(\omega)d\sigma(\omega)=
L^p\int_{0}^{\infty}p\lambda^{p-1}\sigma(\{\omega\in\Sn:Mf_{p,\alpha}(\omega)>L\lambda\})d\lambda.
\end{equation}
Set $\Sigma_\lambda=\sigma(\{\omega\in\Sn:{ M}f_{p,\alpha}(\omega)>L\lambda\}).$ 
Then, for any $\gamma>0$, we have
$$\Sigma_\lambda\le \sigma(\{\omega\in\Sn:{ M}f_{p,\alpha}(\omega)>L\lambda,v(\omega)\le \gamma\})+\sigma(\{\omega\in\Sn:v(\omega)>\gamma\}).$$
If $\gamma$ is a fixed multiple of $\lambda,$ then the latter term is
what we want but we need to obtain a suitable estimate for the first term.

Towards this end, set $$E_{L\lambda,\gamma}=\{\omega\in\Sn:{ M}f_{p,\alpha}(\omega)>L\lambda,v(\omega)\le \gamma\}$$ and
define
$$U(\lambda)=\{\omega\in\Sn:M^*f_{p,\alpha}(\omega)>\lambda\}.$$ Since
$L>2\ge 1,$ clearly $E_{L\lambda,\gamma}\subset U(\lambda).$
We utilize a generalized Whitney decomposition of the open set $U(\lambda)$
as in the proof of Lemma \ref{carlesonlemma}:
$$U(\lambda)=\cup S_{x_k}.$$
where the caps $ S_{x_k}$ have uniformly bounded overlaps and 
\begin{equation}\label{capsdistance}
d( S_{x_k},\partial U(\lambda))\approx \diam( S_{x_k})\approx(1-|x_k|).
\end{equation}
Suppose $\omega\in  S_{x_k}$ is such that $v(\omega)\le \gamma$ and 
${ M}f_{p,\alpha}(\omega)>L\lambda$. According to \eqref{capsdistance}, 
we can choose $\bar{\omega}{   \in\partial} U(\lambda)$ with 
\begin{equation}\label{baari}
d(\omega,\bar{\omega})\le C\diam(S(x_k)).
\end{equation} 
Let $\bar{x}_k\in \Gamma(\bar{\omega})$ satisfy $|\bar{x}_k|=|x_k|.$
By \eqref{capsdistance}, we conclude that $d(x_k,\bar{x}_k)\le C(1-|x_k|)$. 
Hence  Lemma \ref{qcbasic1} allows us to conclude that
\begin{equation}\label{ekayla}
d(f(x_k),f(\bar{x}_k))(1-|x_k|)^{\alpha/p}\le Cd(f(x_k),\partial\Omega)(1-|x_k|)^{\alpha/p}\le Cv(\omega)\le  C\gamma.
\end{equation}
Since $\bar \omega\notin U(\lambda),$ we may deduce from \eqref{ekayla} that
\begin{equation}\label{lambda and gamma}
|f(x_k)|(1-|x_k|)^{\alpha/p}\le (|f(\bar{x}_k)|+d(f(x_k,f(\bar{x}_k)))(1-|x_k|)^{\alpha/p}\le \lambda+C\gamma.
\end{equation}
Next, the assumption that ${ M}f_{p,\alpha}(\omega)>L\lambda,$ allows us to 
choose choose $r_\omega\in(0,1)$ such that 
\begin{equation}\label{fr0iso}
|f(r_\omega\omega)|(1-r_\omega)^{\alpha/p}\ge \frac{1}{2} Mf_{p,\alpha}(\omega)\ge \frac{1}{2}L\lambda.\end{equation}

We proceed to show that 
\begin{equation}\label{r0pieni}
1-r_\omega\le C_0 (1-|x_k|)
\end{equation} 
for an absolute constant 
$C_0.$ Suppose not. Then $1-|x_k|\le \frac{1}{C_0}(1-r_\omega)$, which implies by 
\eqref{baari} that 
$$d(w,\bar w)\le C\diam(S(x_k))\le C(1-|x_k|)\le \frac {C}{C_0}(1-r_\omega).$$
This shows that $r_\omega\omega\in \Gamma_{\bar \omega}$
when $C_0>2C.$  
Since $\frac L 2>1,$ we conclude that  
$$M^*f_{p,\alpha}(\bar{\omega})> \lambda,$$
which contradicts the assumption that $\bar \omega\notin U(\lambda).$


We may assume that $C_0\ge 1.$
By \eqref{fr0iso} together with \eqref{r0pieni} we obtain 
\begin{equation}\label{fiso1}
L\lambda\le 2|f(r_\omega\omega)|(1-r_0)^{\alpha/p}\le2 C_0^{\alpha/p} 
|f(r_\omega\omega)|(1-|x_k|)^{\alpha/p}. 
\end{equation}
Let us fix the value of $L$ by choosing $L= 4C_0^{\alpha/p}.$ 
Then \eqref{fiso1} yields
\begin{equation}\label{fiso2}
2\lambda\le |f(r_\omega\omega)|(1-|x_j|)^{\alpha/p}.
\end{equation}

We proceed to estimate $\sigma(S_{x_k}\cap E_{L\lambda,\gamma}).$ Let
$\omega \in S_{x_k}\cap E_{L\lambda,\gamma}.$ Then there is $r_{\omega}\in (0,1)$ so
that both \eqref{r0pieni} and \eqref{fiso2} hold. Consider the
collection of all the corresponding caps $S_{r_\omega\omega}.$ By the Besicovitch
covering
theorem we find a 
countable subcollection of these caps, say $S_{r_1\omega_1},S_{r_2\omega_2},...,$
so that 
\begin{equation}\label{peitto}
S_{x_k}\cap E_{L\lambda,\gamma}\subset \bigcup_j S_{r_j\omega_j}
\end{equation}
and $\sum_j \chi_ {S_{r_j\omega_j}}(w)\le C_n$ for all $\omega\in \Sn.$
By \eqref{r0pieni}
we further have 
\begin{equation}\label{eipaha}
\sum_j\sigma (S_{r_j\omega_j})\le C_1 \sigma(S_{x_k})
\end{equation} for an absolute constant $C_1.$

Fix one of the caps  $S_{r_j\omega_j}=:S_j$ and let $A\ge 1.$ Write
$$E^j_1(A)=\{w\in S_j\cap S_{x_k}\cap E _{L\lambda,\gamma}:\ 
|f(w)-f(r_j\omega_j)|\ge Ad(f(r_j\omega_j),\partial \Omega)\}$$
and 
$$E^j_2(A)=\{w\in S_j\cap S_{x_k}\cap E _{L\lambda,\gamma}:\ 
|f(w)-f(x_k)|\ge Ad(f(x_k),\partial \Omega)\}.$$
We claim that we can find a constant $C_2$ only depending on $C_0,p,\alpha$ so 
that the choice $\lambda=C_2A\gamma$ guarantees that
\begin{equation}\label{yhdiste}
S_j\cap S_{x_k}\cap E_{L\lambda,\gamma}=E^j_1(A)\cup E^j_2(A).
\end{equation}

Let $\omega\in S_j\cap S_{x_k}\cap E_{L\lambda,\gamma}.$ 
Suppose first that
\begin{equation}\label{eka}
|f(\omega)-f(r_j\omega_j)|(1-r_j)^{\alpha/p}\ge A\gamma.
\end{equation}
Since  $\omega\in E_{L\lambda,\gamma}\cap S_j,$ we have 
$$\gamma \ge d(f(r_j\omega_j),\partial\Omega)(1-r_j)^{\alpha/p},$$ 
and we deduce from \eqref{eka} that $\omega\in E^j_1(A).$
We are left to consider the case
\begin{equation}\label{toka}
|f(\omega)-f(r_j\omega_j)|(1-r_j)^{\alpha/p}< A\gamma.
\end{equation}
%
%
Under this condition, the triangle inequality together with \eqref{r0pieni},
\eqref{fiso1} and
\eqref{lambda and gamma} give
\begin{equation}\begin{split}\label{vihdoin}
&|f(\omega)-f(x_k)|(1-|x_k|)^{\alpha/p}\ge |f(\omega)|(1-|x_k|)^{\alpha/p}-|f(x_k)|(1-|x_k|)^{\alpha/p}\\
\ge&(|f(r_j\omega_j)|-|f(\omega)-f(r_j\omega_j)|)\frac{(1-r_j)^{\alpha/p}}{C_0^{\alpha/p}}-|f(x_k)|(1-x_k)^{\alpha/p}\\
\ge& \frac{L\lambda}{2C_0^{\alpha/p}}-\frac{A\gamma}{C_0^{\alpha/p}}-(\lambda+C\gamma)
\ge2\lambda-(\lambda+C\gamma)-\frac{A\gamma}{C_0^{\alpha/p}}.
\end{split}\end{equation}
We now fix the relation between $\lambda$ and $\gamma$ by setting $\lambda=(C+\frac{A}{C_0^{\alpha/p}}+1)\gamma.$ Then \eqref{vihdoin} reduces to
$$|f(\omega)-f(x_k)|(1-|x_k|)^{\alpha/p}\ge A\gamma\ge Ad(f(x_k),\partial\Omega)(1-|x_k|)^{\alpha/p}$$
and we conclude that $\omega\in E^j_2(A).$ 

According to Lemma \ref{shadowest}, 
\begin{equation}\label{e1varjo}
\sigma(E^j_1(A))\le  \frac {C_2\sigma(S_j)}{(\log A)^{n-1}},
\end{equation}
where $C_2$ depends only on $K,n.$
Thus \eqref{e1varjo} together with \eqref{eipaha} gives
\begin{equation}\label{e1yla}
\sum_j\sigma(E^j_1(A))\le \frac {C_1C_2\sigma(S_{x_k})}{(\log A)^{n-1}}.
\end{equation}
We also deduce via Lemma \ref{shadowest}
that
\begin{equation}\label{e2yla}
\sigma(\cup_j E^j_2(A)) \le \sigma(\{\omega \in S_{x_k}:\ |f(w), f(x_k)|\ge Ad(f(x), \partial \Omega)\}\le \frac {C_2\sigma(S_{x_k})}{(\log A)^{n-1}}.
\end{equation}
Now \eqref{yhdiste} together with \eqref{e1yla} and \eqref{e2yla} gives
\begin{equation}\label{oyla}
\sigma(S_{x_k}\cap E_{L\lambda,\gamma})\le \frac {C_3\sigma(S_{x_k})}{(\log A)^{n-1}},
\end{equation}
where $C_3$ depends only on $K,n.$

By the choice of the caps $S_{x_k},$ the definition of $E_{L\lambda,\gamma}$ and 
\eqref{oyla} give via summing over $k$ the estimate
\begin{equation}\label{kokoylaraja}
\begin{split}
\Sigma_\lambda\le&\sigma(E_{L\lambda,\gamma})+\sigma(\{\omega\in\Sn:v(\omega>\gamma)\})\\
\le& \frac{C_3\sigma(U(\lambda))}{(\log A)^{n-1}}+\sigma(\{\omega\in\Sn:v(\omega>\gamma)\}).
\end{split}
\end{equation}

We insert \eqref{kokoylaraja} into \eqref{intergal of f(w)} and conclude that
\begin{equation}\label{viela}
\begin{split}
\int_{\Sn}{M}f_{p,\alpha}^p(\omega)d\sigma(\omega)&= L^p\int_{0}^{\infty}p\lambda^{p-1}\Sigma_\lambda d\lambda\\
&\le L^p\int_{0}^{\infty}p\lambda^{p-1}\frac{C_3\sigma(U(\lambda))}{(\log A)^{n-1}} d\lambda+ L^p\int_{0}^{\infty}p\lambda^{p-1}\sigma(\{\omega\in\Sn:v(\omega>\gamma)\})d\lambda\\
&\le \frac{C_3L^p}{(\log A)^{n-1}}\int_{\Sn}{M^*_{p,\alpha}f^p}(\omega)d\omega+
L^p\int_{0}^{\infty}p\lambda^{p-1}\sigma(\{\omega\in\Sn:v(\omega>\gamma)\})d\lambda.
\end{split}
\end{equation}
Suppose that the integral on the left-hand-side of \eqref{viela} is finite.
Then Lemma \ref{maximal function estimate} allows us to choose $A$ only
depending on $K,n,p,\alpha,L,C_3$ so that the integral of $M^*_{p,\alpha}f^p$
can be embedded into the left-hand-side. In this case our claim follows
via the Cavalieri formula, recalling that 
$\lambda=(C+\frac{A}{C_0^{\alpha/p}}+1)\gamma.$
We are left with the case where the integral on the  
left-hand-side of \eqref{viela} is infinite. In this case, we replace $f$ by
the $K$-quasiconformal map $f^j$ defined by setting $f^j(x)=f((1-1/j)x).$ 
Since the corresponding integral is now finite, we obtain a uniform estimate
for the integral of $Mf^j_{p,\alpha}$ in terms of the integral of $v^j,$
defined analogously. The desired estimate follows via the Fatou lemma
by letting $j$ tend to
infinity since it easily follows that $v_j(\omega)\le v(\omega)$ for all $\omega$ and that $Mf^j_{p,\alpha}(\omega)\to Mf_{p,\alpha}(\omega)$ for a.e. $\omega.$ 
\end{proof}}

\begin{lemma}\label{energychar}
Let $f: \B^n \rightarrow \R^n$ be a quasiconformal mapping, $0 < p < \infty$ and $\alpha \geq 0$. Then the following are equivalent:
\begin{enumerate}
\item$ \int_{\Sn} { Mf^p_{p,\alpha}}d\sigma < \infty$
\item$\int_{\B^n} a_f^p(x)(1-|x|)^{p-1+\alpha}dx < \infty$
\item $\int_{\Sn} \sup_{x\in\Gamma(\omega)}a_f^p(x)(1-|x|)^{p+\alpha}d\sigma < \infty$
\end{enumerate}
\end{lemma}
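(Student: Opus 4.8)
Translating $f$ by $f(0)$ changes neither $a_f$ nor $d(f(x),\partial\Omega)$, and it changes $\int_{\mathbb{S}^{n-1}}Mf^p_{p,\alpha}\,d\sigma$ only by a bounded additive term (since $Mf_{p,\alpha}$ is subadditive and $\sup_{0<r<1}(1-r)^{\alpha/p}=1$ when $\alpha\ge 0$), so we may assume $f(0)=0$, hence $f(x)\neq 0$ for $x\neq 0$. I would organise the whole proof around the function
\[
v(\omega)=\sup_{x\in\Gamma(\omega)}d(f(x),\partial\Omega)(1-|x|)^{\alpha/p}.
\]
By Lemma~\ref{qcbasic2}, $a_f(x)^p(1-|x|)^{p+\alpha}=(a_f(x)(1-|x|))^p(1-|x|)^\alpha\approx d(f(x),\partial\Omega)^p(1-|x|)^\alpha$, so taking suprema over $\Gamma(\omega)$ shows that condition $(3)$ holds if and only if $v\in L^p(\mathbb{S}^{n-1})$.

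\textbf{The equivalence $(1)\Leftrightarrow(3)$, and $(2)\Rightarrow(1)$.} First I would prove the pointwise bound $v(\omega)\le C\,Mf_{p,\alpha}(\omega)$. Given $x\in\Gamma(\omega)$, choose $r$ with $x\in B_{r\omega}$ and set $s=(1+r)/2$, so $s\omega\in\partial B_{r\omega}$ and $1-s\approx 1-r\approx 1-|x|$; then Lemma~\ref{qcbasic1} gives $d(f(x),\partial\Omega)\le|f(x)-f(r\omega)|+d(f(r\omega),\partial\Omega)\le C\,d(f(r\omega),\partial\Omega)\approx|f(s\omega)-f(r\omega)|\le|f(s\omega)|+|f(r\omega)|$, and multiplying by $(1-|x|)^{\alpha/p}$ (using $\alpha\ge 0$ to compare $(1-|x|)^{\alpha/p}$ with $(1-r)^{\alpha/p}$ and $(1-s)^{\alpha/p}$) yields $d(f(x),\partial\Omega)(1-|x|)^{\alpha/p}\le C\,Mf_{p,\alpha}(\omega)$; the supremum over $x$ gives the claim. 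Thus $(1)$ implies $v\in L^p$, i.e.\ $(3)$. Conversely, whenever $v\in L^p$ — in particular if $(3)$ holds, and also, by Lemma~\ref{afubini}, if $(2)$ holds — Lemma~\ref{goodlambdaie} gives $\int_{\mathbb{S}^{n-1}}Mf^p_{p,\alpha}\,d\sigma\le C\int_{\mathbb{S}^{n-1}}v^p\,d\sigma<\infty$, which is $(1)$. This settles $(1)\Leftrightarrow(3)$ and $(2)\Rightarrow(1)$.

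\textbf{The crux: $(1)\Rightarrow(2)$.} This is the only genuinely new implication. Assume first $0<p<n$. Since $f(0)=0$ and $f$ is injective, $f(x)\neq 0$ for $x\neq 0$, so by Lemma~\ref{carlesonmeasure} the measure $d\mu=|Df(x)|^p|f(x)|^{-p}(1-|x|)^{p-1}\,dx$ is a Carleson measure. As $\alpha\ge 0$, Lemma~\ref{carlesonlemma} applies and gives
\[
\int_{\mathbb{B}^n}|Df(x)|^p(1-|x|)^{p-1+\alpha}\,dx=\int_{\mathbb{B}^n}|f(x)|^p(1-|x|)^\alpha\,d\mu\le C\int_{\mathbb{S}^{n-1}}Mf^p_{p,\alpha}\,d\sigma<\infty .
\]
Since $u(x)=(1-|x|)^{p-1+\alpha}$ satisfies $u(y)\approx u(x)$ for $y\in B_x$, Lemma~\ref{qcbasic3} with $q=p\le n$ converts this into $\int_{\mathbb{B}^n}a_f^p(x)(1-|x|)^{p-1+\alpha}\,dx\approx\int_{\mathbb{B}^n}|Df(x)|^p(1-|x|)^{p-1+\alpha}\,dx<\infty$, which is $(2)$.

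\textbf{The case $p\ge n$ — the main obstacle.} Here the Carleson measure above is no longer available, and I expect this to be the hardest point. Using Lemma~\ref{qcbasic3} with $q=n$ and $a_f(x)(1-|x|)\approx d(f(x),\partial\Omega)$ (Lemma~\ref{qcbasic2}), the integral in $(2)$ is comparable to $\int_{\mathbb{B}^n}d(f(x),\partial\Omega)^{p-n}|Df(x)|^n(1-|x|)^{n-1+\alpha}\,dx$. On $\{|f(x)|\le 1\}$ this integrand is $\le C|Df(x)|^n$, and since $|Df|^n\approx J_f$ and $(1-|x|)^{n-1+\alpha}\le 1$, a change of variables bounds that part by $C|f(\{|f|\le 1\})|\le C|B(0,1)|<\infty$; on the complement $d(f(x),\partial\Omega)\le|f(x)|+d(0,\partial\Omega)\le C|f(x)|$, so the rest is $\lesssim\int_{\mathbb{B}^n}|f(x)|^{p-n}|Df(x)|^n(1-|x|)^{n-1+\alpha}\,dx=\int_{\mathbb{B}^n}|f(x)|^p(1-|x|)^\alpha\,d\tilde\mu$ with $d\tilde\mu=|Df(x)|^n|f(x)|^{-n}(1-|x|)^{n-1}\,dx$. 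The remaining step is to dominate this by $\int_{\mathbb{S}^{n-1}}Mf^p_{p,\alpha}\,d\sigma$; since $\tilde\mu$ is \emph{not} a Carleson measure when $f(\mathbb{B}^n)$ winds around the origin, this forces a localised argument (a Carleson-type estimate valid away from the zero set of $f$, or a direct modulus/shadow argument in the spirit of Lemma~\ref{shadowest} and the good-$\lambda$ method of Lemma~\ref{goodlambdaie}), and this is the part I would expect to demand the most care. Once $(1)\Rightarrow(2)$ is known for all $p$, it combines with the equivalences above to close the cycle $(1)\Leftrightarrow(2)\Leftrightarrow(3)$.
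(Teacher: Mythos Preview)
Your cycle $(1)\Leftrightarrow(3)$ and $(2)\Rightarrow(1)$ is correct and essentially matches the paper (the paper does $(2)\Rightarrow(3)$ directly via Lemma~\ref{afubini} and $(3)\Rightarrow(1)$ via Lemma~\ref{goodlambdaie}, but your reorganisation through the pointwise bound $v\le C\,Mf_{p,\alpha}$ is fine). Your argument for $(1)\Rightarrow(2)$ when $0<p<n$ is also right and in fact slightly broader than the paper's, which splits at $p\le 1$ versus $p>1$.

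The genuine gap is exactly where you flag it: the case $p\ge n$ of $(1)\Rightarrow(2)$. Your attempt with $q=n$ in Lemma~\ref{qcbasic3} leads to the measure $|Df|^n|f|^{-n}(1-|x|)^{n-1}\,dx$, which is the borderline case excluded from Lemma~\ref{carlesonmeasure}, and the localisation you sketch is not needed. The paper's trick is to take $q=1$ rather than $q=n$ in Lemma~\ref{qcbasic3}: for $p>1$ this gives
\[
\int_{\mathbb{B}^n} a_f^p(x)(1-|x|)^{p-1+\alpha}\,dx
\;\approx\;
\int_{\mathbb{B}^n} |Df(x)|\,a_f^{p-1}(x)(1-|x|)^{p-1+\alpha}\,dx,
\]
and then Lemma~\ref{qcbasic2} turns $a_f^{p-1}(x)(1-|x|)^{p-1}$ into $d(f(x),\partial\Omega)^{p-1}$. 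Picking any fixed $y\in\partial\Omega$ and using $d(f(x),\partial\Omega)\le|f(x)-y|$, the integral is bounded by
\[
\int_{\mathbb{B}^n} |f(x)-y|^{p}(1-|x|)^{\alpha}\,d\mu,
\qquad
d\mu=|Df(x)|\,|f(x)-y|^{-1}\,dx .
\]
Now Lemma~\ref{carlesonmeasure} applied to the quasiconformal map $f-y$ with exponent $1\in(0,n)$ shows that $\mu$ \emph{is} a Carleson measure, so Lemma~\ref{carlesonlemma} (for $f-y$) bounds the last integral by $C\int_{\mathbb{S}^{n-1}} M(f-y)_{p,\alpha}^p\,d\sigma$, which is finite by $(1)$ since $\alpha\ge 0$. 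This closes the cycle without any delicate local analysis; the point you were missing is simply that one should drop from $q=n$ to $q=1$ so that the Carleson-measure lemma is available.
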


\begin{proof}
$\bf(1\implies 2)$ 
Suppose first that $0<p\le 1.$ We may assume that $f\neq 0$ in $\Bn.$ Then the
measure given by $d\mu=|Df|^p|f|^{-p}(1-|x|)^{p-1}dx$ is a Carleson measure by
Lemma \ref{carlesonmeasure} and hence Lemma \ref{qcbasic3} and
Lemma \ref{carlesonlemma} give
\begin{align*}
&\int_{\B^n} a_f^p(x)(1-|x|)^{p-1+\alpha}dx\le C\int_{\B^n} |Df|^p(1-|x|)^{p-1+\alpha}dx\\
\le C& \int_{\B}|f(x)|^p(1-|x|)^{\alpha}d\mu(x)\le C \int_{\Sn} Mf^p_{p,\alpha}(\omega)d\sigma.
\end{align*}

Suppose finally that $p>1$ and pick $y\in \partial f(\Bn)$.
By Lemma \ref{qcbasic3} and Lemma \ref{qcbasic2}, we have
\begin{align*}
&\int_{\Bn}a_f^p(x)(1-|x|)^{p-1+\alpha}dx\le C\int_{\Bn}|Df|a_f^{p-1}(x)(1-|x|)^{p-1+\alpha}dx\\
\le C&\int_{\Bn}|Df|d(f(x),\partial f(\Bn))^{p-1}(1-|x|)^{\alpha}dx\le C\int_{\Bn}|Df||f(x)-y|^{p-1}(1-|x|)^{\alpha}dx.
\end{align*}
Since $f(x)-y\neq 0$ in $\Bn,$ the measure given by $d\mu=|Df(x)||f(x)-y|{^{\red -1}}dx$ is a Carleson measure Lemma \ref{carlesonmeasure}. Hence we can apply 
Lemma \ref{carlesonlemma} to conclude that
$\int_{\Bn}a_f^p(x)(1-|x|)^{p-1+\alpha}dx<\infty.$


$\bf(2\implies3)$ This follows from Lemma \ref{afubini}.

$\bf(3\implies 1)$ 
By Lemma \ref{qcbasic2} we have that $d(f(x),\partial \Omega)\le C
a_f(x)(1-|x|).$  Hence $\bf (1)$ follows from 
$\bf (3)$ by Lemma \ref{goodlambdaie}.
%
%
\end{proof}

\begin{lemma}\label{p>n,negalpha}
Let $f: \B^n \rightarrow \R^n$ be a quasiconformal mapping  with $f(0)=0.$ Let
$p\ge n$ and $-1 < \alpha$. Then 
$$
\int_0^1 (1-r)^{n-2+\alpha}M^p(r,f) dr \leq C \int_{\Bn} a_f(x)^p(1-|x|)^{p-1+\alpha}dx.
$$ 
\end{lemma}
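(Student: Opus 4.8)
The plan is to dominate $M(r,f)$ pointwise by a telescoping sum of averaged derivatives over dyadic shells, pass from that to a weighted sum of shell integrals of $a_f^p$, and then integrate against $(1-r)^{n-2+\alpha}\,dr$, absorbing the geometric factors that appear by a Hölder redistribution. Throughout write $\Omega=f(\B^n)$ and $\beta=n-1+\alpha$, and note $\beta>0$ since $\alpha>-1$ and $n\ge 2$. \emph{Step 1 (Whitney chain).} Fix $r\in[0,1)$, let $N=N(r)=\lfloor\log_2\tfrac1{1-r}\rfloor$, and for $|z|=r$ set $z^{(k)}=(1-2^{-k})\,z/|z|$ for $0\le k\le N$. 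An elementary computation gives $z^{(k+1)}\in\partial B_{z^{(k)}}$ for $k<N$ and $z\in B_{z^{(N)}}$, so Lemma~\ref{qcbasic1} and Lemma~\ref{qcbasic2} together with $f(0)=0$ yield
$$|f(z)|=|f(z)-f(0)|\le C\sum_{k=0}^{N}d(f(z^{(k)}),\partial\Omega)\le C\sum_{k=0}^{N}a_f(z^{(k)})\,2^{-k}.$$
Taking the supremum over the direction $z/|z|$ and writing $A_k=\sup_{|y|=1-2^{-k}}a_f(y)$ gives $M(r,f)\le C\sum_{k=0}^{N(r)}2^{-k}A_k$.

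\emph{Step 2 (from $A_k$ to a shell integral).} Lemmas~\ref{qcbasic1} and \ref{qcbasic2} show that $a_f$ is comparable on any Whitney ball $B_x$ (compare $d(f(\cdot),\partial\Omega)$ and $1-|\cdot|$ on $B_x$), hence $a_f(y)^p\approx\barint_{B_y}a_f^p$. For $|y|=1-2^{-k}$ one has $B_y\subset T_k:=\{x:2^{-k-1}\le 1-|x|\le 3\cdot 2^{-k-1}\}$, where $1-|x|\approx 2^{-k}$; so, with $J_k:=\int_{T_k}a_f^p(x)(1-|x|)^{p-1+\alpha}\,dx$,
$$A_k^p\le C\,2^{kn}\int_{T_k}a_f^p\,dx\le C\,2^{k(n+p-1+\alpha)}J_k,\qquad\text{whence}\qquad 2^{-k}A_k\le C\,2^{k\beta/p}J_k^{1/p}=:Cb_k .$$
The shells $T_k$ have bounded overlap, so $\sum_{k\ge 0}J_k\le C\int_{\B^n}a_f^p(1-|x|)^{p-1+\alpha}\,dx$.

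\emph{Step 3 (summation).} By Steps 1--2, $M(r,f)\le C\sum_{k=0}^{N(r)}b_k$, and since $\int_{\{N(r)=N\}}(1-r)^{n-2+\alpha}\,dr\approx 2^{-N\beta}$,
$$\int_0^1(1-r)^{n-2+\alpha}M^p(r,f)\,dr\le C\sum_{N=0}^{\infty}2^{-N\beta}\Big(\sum_{k=0}^{N}b_k\Big)^p .$$
Since $p\ge n\ge 2>1$, Hölder applied to $b_k=\big(b_k^p\,2^{(N-k)\beta/2}\big)^{1/p}\,2^{-(N-k)\beta/(2p)}$ gives $\big(\sum_{k=0}^{N}b_k\big)^p\le C\sum_{k=0}^{N}b_k^p\,2^{(N-k)\beta/2}$; inserting this and summing the resulting geometric series (first in $N\ge k$, then in $k$) yields
$$\int_0^1(1-r)^{n-2+\alpha}M^p(r,f)\,dr\le C\sum_{k=0}^{\infty}b_k^p\,2^{-k\beta}=C\sum_{k=0}^{\infty}J_k\le C\int_{\B^n}a_f^p(x)(1-|x|)^{p-1+\alpha}\,dx,$$
which is the assertion.

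\emph{Main obstacle.} Steps 1 and 2 are routine Whitney-chain and shell bookkeeping. The substantive point is Step 3: replacing the average by a supremum over directions in Step 1 destroys localization, and the per-shell estimate consequently carries the growing factor $2^{k\beta/p}$. This factor is absorbed against the decaying outer weight $2^{-N\beta}$ exactly because $\beta>0$, the Hölder split above redistributing half of the exponent so that the double sum collapses to $\sum_kJ_k$. The hypotheses of the lemma are used only through $\beta=n-1+\alpha>0$ (from $\alpha>-1$, $n\ge2$) and $p\ge1$ (from $p\ge n$).
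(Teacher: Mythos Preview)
Your argument is correct and is genuinely different from the paper's. The paper proceeds via Lemma~\ref{afubini} to reduce to $\int_{\Sn}v^p\,d\sigma$ with $v(\omega)=\sup_{x\in\Gamma(\omega)}d(f(x),\partial\Omega)(1-|x|)^{\alpha/p}$, then discretizes $r_i=1-2^{-i}$, picks $x_i$ maximizing $|f|$ on each sphere, and splits the index set into ``good'' indices (where $|f(x_i)|\lesssim d(f(x_i),\partial\Omega)$) and ``bad'' ones. Bad indices are collapsed onto good ones by an inductive geometric-decay estimate (this is where $n-1+\alpha>0$ enters, via $\lambda^p 2^{-(n-1+\alpha)}<1$), and the remaining good-index sum is dominated by $\int_{\Sn}v^p\,d\sigma$ using a bounded-overlap shadow argument. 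Your route bypasses all of this: a straight Whitney chain from the origin gives $M(r,f)\le C\sum_{k\le N(r)}2^{-k}A_k$, you accept the crude shell bound $A_k^p\lesssim 2^{k(n+p-1+\alpha)}J_k$ coming from the sup-to-integral passage, and then recover the loss by the H\"older redistribution in Step~3, which again needs exactly $\beta=n-1+\alpha>0$. Your proof is shorter, avoids shadows, cones, and the nontangential machinery altogether, and makes transparent that only $p>1$ and $\alpha>-1$ are actually used; the paper's approach, on the other hand, fits more naturally with the surrounding results (in particular with Lemma~\ref{afubini} and the maximal-function framework used elsewhere in Section~3), and gives the intermediate estimate against $\int_{\Sn}v^p\,d\sigma$, which is of independent interest.
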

 \begin{proof}


Define $v(\omega)=\sup_{x\in \Gamma(\omega)}d({   f}(x),\partial{   f}(\Bn))(1-|x|)^{\alpha/{   p}}.$ By Lemma \ref{afubini}
we only need to show that 
\begin{equation}\label{puuttuva1}
\int_{0}^{1}(1-r)^{n-2+\alpha}M(r,{   f})^{   p}dr\le C\int_{\Sn}v^{   p}d\sigma.
\end{equation}

For each $i\ge 1,$ let $r_i=1-2^{-i}$ and pick $x_i\in \Sn(r_i)$ with  $|f(x_i)|=M(r_i,{   f})$. 
{Then}
 \begin{equation}\label{puuttuva2}\begin{split}
\int_{0}^{1}(1-r)^{n-2+\alpha}{   M(r,f)^p}dr&=\sum_{i=1} ^{\infty}\int_{r_{i-1}}^{r_i}(1-r)^{n-2+\alpha}{   M(r,f)^p}dr\\
&\le C \sum_{i=1} ^{\infty}|{   f}(x_i)|^{   p} (1-|x_i|)^{n-1+\alpha}.
\end{split}\end{equation}
Let $\tilde{C}$ be a constant, to be determined later, and let
${ G}(f)=\{i\in \mathbb{N}:|{   f(x_i)}| \le \tilde{C} d({   f(x_i)},\partial f(\Bn))\}$ and $B(f)=\mathbb{N}\setminus { G}(f).$
For $i\in { G}(f)$ and $\omega\in S(x_i)$, we have 
\begin{equation}\label{lem-dirichilet-hardy2}
|{   f(x_i)}|^p (1-|x_i|)^{n-1+\alpha} \le \tilde{C}^{   p} d({   f(x_i)},\partial f(\Bn))^p(1-|x_i|)^{n-1+\alpha}\le \tilde{C}^{   p} v_f(\omega)^p(1-|x_i|)^{n-1}.
\end{equation}

Letting $\delta=1/\tilde{C}$, for $i\in B(f),$ we have $ d({   f(x_i)},\partial f(\Bn))\le \delta|{   f(x_i)}|$.
Set $\omega_i=\frac{x_i}{|x_i|}$, and let $y_{i-1}=r_{i-1}\omega_i$.  Then we have $x_i\in B_{y_{i-1}}$.
   Hence Lemma \ref{qcbasic1} gives
\begin{equation*}
\begin{split}
|f(x_i)-f(y_{i-1})|\le \diam f(B_{ y_{i-1}})\le{  C d(f(B_{y_{i-1}}),\partial\Omega)\le C^2 d(f(x_i),\partial\Omega)}.
\end{split}
\end{equation*}

Therefore, by the choice of $x_{i-1},$ we obtain 
$$|{   f}(x_i)|\le |{   f}(y_{i-1})|+ C^2\delta|f(x_i)|\le |f(x_{i-1})|+C^2\delta|f(x_i)|.$$
If $\tilde C$ is sufficiently large, then $C^{ 2}\delta< 1$ and we have
\begin{equation}\label{gi and gi-11}
|{   f}(x_i)|\le { \lambda}|{   f}(x_{i-1})|,
\end{equation}
where $\lambda=1/(1-C^2\delta).$ By multiplying both sides of 
\eqref{gi and gi-11} to
 $(1-|x_i|)^{(n-1+\alpha)/{   p}}$ and raising to power ${p}$, we  conclude that
\begin{equation}
\begin{split}
|{   f}(x_i)|^{   p}(1-|x_i|)^{n-1+\alpha}&\le { \lambda}^{   p}|{   f}(x_{i-1})|^{   p}(1-|x_i|)^{n-1+\alpha}\\
&={ \lambda}^{   p}|{   f}(x_{i-1})|^{   p}2^{-(n-1+\alpha)}(1-|x_{i-1}|)^{n-1+\alpha}.
\end{split}\end{equation}
Now, notice that $n-1+\alpha>0$ for $\alpha>-1$. By recalling that 
$\delta=1/\tilde{C}$  and $\lambda=1/(1-C^2\delta)$, we find 
$\tilde{C}{ =\tilde{C}(p,C)}$ big enough such that ${\lambda}^{   p}2^{-(n-1+\alpha)}<1$ and $\tilde{C}\ge C_0.$ Then there exists $c<1,$ 
such that
$$|{   f}(x_i)|^{   p}(1-|x_i|)^{n-1+\alpha}\le c|{   f}(x_{i-1})|^{   p}(1-|x_{i-1}|)^{n-1+\alpha}.$$
Since $x_0=0$ and $f(0)=0,$ we have $0\in G(f).$
If $i-1\in B({   f}),$ we repeat the above argument with $i$ replaced by $i-1$ 
and arrive at 
$$|{   f}(x_i)|^{   p}(1-|x_i|)^{n-1+\alpha}\le c^2|{   f}(x_{i-2})|^{   p}(1-|x_{i-2}|)^{n-1+\alpha}.$$
We repeat inductively until ${i-k}\in G(f):$ 
for each $i\in { B }({   f})$, there exists $k$ such that $l\in B({   f})$,  
for all $ i-k<l\le i,$ $i-k\in G({   f})$ ande 
$$|{   f}(x_i)|^{   p}(1-|x_i|)^{n-1+\alpha}\le c^k|{   f}(x_{i-k})|^{p}(1-|x_{i-k}|)^{n-1+\alpha}.$$
Therefore, 
\begin{equation}\label{lem-dirichilet-hardy3}
\begin{split}
\sum_{i=0}^{\infty}{   |f(x_i)|^p} (1-|x_i|)^{n-1+\alpha}& \le C \sum_{i\in  G(f)}{   |f(x_i)|^p} (1-|x_{i}|)^{n-1+\alpha}\\
&\le C\tilde{C}\sum_{i\in {G(f}}d({   f}(x_i),\partial {   f}(\Bn))^{   p} (1-|x_{i}|)^{\alpha}(1-|x_{i}|)^{n-1}.
\end{split}\end{equation} 
Set $S_i=S_{x_i}$ for $i\in { G}(f)$.
Since $|x_i|=1-2^{-i},$ the definition of the shadows $S_i$, gives that
$$\sum_{j>i}\sigma(S_j)\le \sigma(S_i).$$
Thus, we also have
$$\sigma(\{\omega\in S_i:\sum_{j>i}\chi_{S_j}(\omega)\ge 2\})\le \frac{1}{2}\sigma(S_i).$$
Hence there is $\hat{S}_i\subset S_i$ with $\sigma(\hat{S}_i)\ge \frac{1}{2}\sigma(S_i)=C_n(1-|x_i|)^{n-1}$ and so that no point of $\hat{S}_i$ belongs to more 
than one $S_j.$ Then $$\sum_{j\in G(f)}\chi_{\hat{S}_i}(\omega)\le 2.$$ By 
combining \eqref{lem-dirichilet-hardy2} and \eqref{lem-dirichilet-hardy3}, we 
finally obtain that
\begin{align*}
\sum_{i=0}^{\infty}|f(x_i)|^p(1-|x_i|)^{n-1+\alpha}\le \frac{C\tilde{C}}{C_n}\sum_{i\in S(f)}\int_{\hat{S}_i}v^p(\omega)d\sigma\le  \frac{C\tilde{C}}{C_n}\int_{\Sn}v^p(\omega)d\sigma.
\end{align*}
This together with \eqref{puuttuva2} gives \eqref{puuttuva1} and hence our
claim follows. 
\end{proof}

\vskip 10mm

\begin{proof}[\bf Proof of Theorem \ref{for all p}]
  We will first prove the equivalence of  \eqref{p-weighted-hp}, \eqref{p-image-area-estimate} and \eqref{p-area-estimate}.

We assume $f(0) = 0$ and handle the cases $0 < p \leq n$ and $p > n$ separately.

{\bf Case 1} First suppose $0 < p \leq n$. Then 
\begin{gather*}
\int_{B(0,r)} |f|^{p-n}|Df|^n dx \leq K\int_{B(0,r)}|f|^{p-n}J_f(x) dx = K\int_{f(B(0,r))}|y|^{p-n}dy \\ \stackrel{(*)}{\leq} K\int_{B(0, \sqrt[n]{|f(B(0,r))|})} |y|^{p-n}dy  = K\int_{\Sn}\int_0^{\sqrt[n]{(|f(B(0,r))}} t^{p-1} dt d\sigma = C(K,n,p)  |f(B(0,r))|^{p/n} \\ = C\left( \int_{B(0,r)} J_f(x) dx \right)^{p/n}  \leq C\left( \int_{B(0,r)} |Df(x)|^n dx \right)^{p/n},
\end{gather*}
where $(*)$ holds since the weight function $|y|^{p-n}$ is radially decreasing when $0<p\le n.$
We have proved that $\eqref{p-area-estimate} \Rightarrow \eqref{p-image-area-estimate}$. 

Now let $g = |f|^{(p-n)/n}f.$ Since quasiconformal mappings are differentiable almost everywhere, we can calculate that 
\begin{eqnarray*}
Dg = |f|^{(p-n)/n}\left(I + \frac{p-n}{n}\frac{f^Tf}{|f|^2}\right)Df\;\;\;\; (\textnormal {for a.e. $x \in \B^n$}),
\end{eqnarray*}
and so $|Dg| \lesssim  |f|^{(p-n)/n}|Df|$. Then Fubini's theorem and Lemma \ref{qcbasic3} give 
\begin{gather*}
\int_0^1 (1-r)^{n-2+\alpha}\int_{B(0,r)}|f|^{p-n}|Df|^n \gtrsim \int_0^1 (1-r)^{n-2 + \alpha} \int_{B(0,r)} |Dg|^n dx dr \\ 
{\approx} \int_{\B^n} |Dg|^n(1-|x|)^{n-1+\alpha}dx \approx \int_{\B^n} a_g^n(x)(1-|x|)^{n-1+\alpha}dx.
\end{gather*} 
So by assuming \eqref{p-image-area-estimate} in the statement of the theorem holds, we can apply Lemma \ref{p>n,negalpha} to the { quasiconformal mapping} $g$, which gives 

\begin{equation}\label{p-image-area-implies-maximal-modulus,p<n}
\begin{split}
&\int_0^1 (1-r)^{n-2+\alpha}M^p(r,f) dr \approx\int_0^1 (1-r)^{n-2+\alpha}M^n(r,g) dr\\
\lesssim & \int_{\Bn} a_g(x)^n(1-|x|)^{n-1+\alpha}dx\lesssim \int_0^1 (1-r)^{n-2+\alpha}\int_{B(0,r)}|f|^{p-n}|Df|^ndxdr.
\end{split}
\end{equation}
We have shown that $\eqref{p-image-area-estimate}\implies\eqref{p-weighted-hp}.$

The implication that $\eqref{p-weighted-hp} \implies \eqref{p-area-estimate}$ is a straightforward application of H\"older's inequality and the definition of $M(r,f)$. Indeed,
 \begin{equation}\begin{split}\label{p-maximal-modulus-implie-area}
&\int_0^1 (1-r)^{n-2+\alpha}\left(\int_{B(0,r)}|Df(x)|^n dx\right)^{p/n}dr  \leq \int_0^1 (1-r)^{n-2+\alpha}\left(\int_{B(0,r)}|Df(x)|^n|f(x)|^{p-n}M(r,f)^{n-p} dx\right)^{p/n}dr \\ = 
&\int_{0}^{1}(1-r)^{(n-2+\alpha)(n-p)/n}M(r,f)^{(n-p)p/n}\left((1-r)^{n-2+\alpha}\int_{B(0,r)}|Df(x)|^n|f(x)|^{p-n}dx\right)^{p/n}dr \\ \leq &\left(\int_{0}^{1}(1-r)^{n-2+\alpha}M(r,f)^pdr\right)^{(n-p)/n}\left(\int_{0}^{1}(1-r)^{n-2+\alpha}\int_{B(0,r)}|Df|^n|f|^{p-n}dxdr\right)^{p/n}.
\end{split}\end{equation}
Since
\begin{gather*}
\int_{B(0,r)}|Df|^n|f|^{p-n}dxdr \leq K\int_{f(B(0,r))}|y|^{p-n}dy \leq K \int_{B(0, M(r,f))} |y|^{p-n}dy \leq C(n,K,p) M(r,f)^p,
\end{gather*}
the desired result follows. 


{\bf Case 2} We assume {that} $p>n.$
{ Now}
\begin{align*}
\int_{B(0,r)}|Df|^ndx\le K\int_{B(0,r)}J_f(x)dx\le K|f(B(0,r))|\le K M(r,f)^n
\end{align*}
Therefore,  $\eqref{p-weighted-hp}$ { implies} $\eqref{p-area-estimate}$.
                       
Set $g=|f|^{(p-n)/n}f.$ Then $|g|^n=|f|^p,$ and $M(r,f)^p=M(r,g)^n.$ 
Analogously to \eqref{p-image-area-implies-maximal-modulus,p<n}, 
by Lemma \ref{p>n,negalpha}. We have
\begin{equation}\begin{split}\label{p-image-area-implies-maximal-modulus,p>n}
\int_0^1 (1-r)^{n-2+\alpha}M^p(r,f) dr\le \int_0^1 (1-r)^{n-2+\alpha}M^n(r,g) dr\ \lesssim \int_0^1 (1-r)^{n-2+\alpha}\int_{B(0,r)}|f|^{p-n}|Df|^ndxdr.
\end{split}
\end{equation}
Hence $\eqref{p-image-area-estimate}\implies \eqref{p-weighted-hp}.$

We need to show that $\eqref{p-area-estimate}\implies\eqref{p-image-area-estimate}.$ First of all,
\begin{equation}\label{p>n,1}
\begin{split}
&\int_{0}^{1}(1-r)^{n-2+\alpha}\int_{B(0,r)}|f|^{p-n}|Df|^ndxdr
\le \int_{0}^{1}(1-r)^{n-2+\alpha}M(r,f)^{p-n}\int_{B(0,r)}|Df|^ndxdr\\
\stackrel{\text{H\"older}}{\le}&(\int_{0}^{1}\left((1-r)^{n-2+\alpha}M(r,f))^{p}dr\right)^{(p-n)/p}\left(\int_{0}^{1}(1-r)^{n-2+\alpha}\left(\int_{B(0,r)}|Df|^ndx\right)^{p/n}dr\right)^{n/p}.
\end{split}
\end{equation}
If the later term on the right-hand-side is finite, then we obtain via
\eqref{p-image-area-implies-maximal-modulus,p>n} that
$$\int_{0}^{1}(1-r)^{n-2+\alpha}\int_{\Bn}|f|^{p-n}|Df|^ndxdr\le C\int_{0}^{1}(1-r)^{n-2+\alpha}\left(\int_{\Bn}|Df|^ndx \right)^{p/n}dr.$$
Since the constant in this inequality only depends on $n,K,p,\alpha$ the 
general case easily follows by applying this estimate with $f$ replaced by 
$f_j,$ defined by setting
$f_j(x)=f((1-1/j)x),$ and by passing to the limit.

We have shown the equivalence of \eqref{p-weighted-hp},\eqref{p-image-area-estimate} and \eqref{p-area-estimate} under the additional assumption that $f(0)=0.$
The general case follows since each of them holds for a $f$ if and only if
it holds for $g,$ defined by setting $g(x)=f(x)-f(0).$

{ We are left with the equivalence of \eqref{p-weighted-hp}--\eqref{p-averaged-gradient-estimate}, when $\alpha\ge 0$ or when $-1<\alpha<0$ with $p\ge n$.}

For $\alpha\ge 0,$ by Lemma \ref{weightedmaxmod} and Lemma \ref{energychar}, we { know} that { \eqref{p-weighted-hp} is equivalent to \eqref{p-averaged-gradient-estimate}.}

For $-1<\alpha<0,p\ge n$, by Lemma \ref{p>n,negalpha} { we know that \eqref{p-averaged-gradient-estimate} implies \eqref{p-maximal-modulus-estimate}, so}  we only need to show that $\eqref{p-image-area-estimate}$ { implies} $\eqref{p-averaged-gradient-estimate}$
Fix $y\in\partial f(\B)$.  Then, Lemma \ref{qcbasic2} and \ref{qcbasic3} give us that
\begin{equation}\begin{split}\label{p-maximal-implies-averaged-gradient,p>n}
\int_{\Bn}a_f(x)^p(1-|x|)^{p-1+\alpha}dx&\approx\int_{\Bn}a_f(x)^{p-n}(1-|x|)^{p-n}|Df(x)|^n(1-|x|)^{n-1+\alpha}dx\\
&\le C\int_{\Bn}d(f(x),\partial\Omega)^{p-n}|Df(x)|^n(1-|x|)^{n-1+\alpha}dx\\
&\le C\int_{\Bn}|f(x)-y|^{p-n}|Df(x)|^n(1-|x|)^{n-1+\alpha}dx\\
&\approx \int_{0}^{1}(1-r)^{n-2+\alpha}\int_{B(0,r)}|f(x)-y|^{p-n}|Df(x)|^ndxdr.
\end{split}\end{equation} 
 Notice that $|f(x)-y|\le |f(x)|+|y|$ and $|f(x)-y|^{p-n}\le C(p,n)(|f(x)|^{p-n}+|y|^{p-n})$. By \eqref{p-maximal-implies-averaged-gradient,p>n}, we need to show that
\begin{equation*}
\begin{split}
&\int_{0}^{1}(1-r)^{n-2+\alpha}\int_{B(0,r)}(|f(x)|^{p-n}+|y|^{p-n})|Df(x)|^ndxdr=\\
=&\int_{0}^{1}(1-r)^{n-2+\alpha}\int_{B(0,r)}|f(x)|^{p-n}|Df(x)|^ndxdr+|y|^{p-n}\int_{0}^{1}(1-r)^{n-2+\alpha}\int_{B(0,r)}|Df(x)|^ndxdr\\
=&(\uppercase\expandafter{\romannumeral1})+(\uppercase\expandafter{\romannumeral2})<\infty.
\end{split}
\end{equation*}
From the equivalenc of \eqref{p-maximal-modulus-estimate} and \eqref{p-image-area-estimate}, we know that $(\uppercase\expandafter{\romannumeral1})<\infty.$ On the other hand, we have
\begin{eqnarray*}
&\int_{0}^{1}(1-r)^{n-2+\alpha}\int_{B(0,r)}|Df(x)|^ndxdr\\
\le &\left(\int_{0}^{1}(1-r)^{n-2+\alpha}\left(\int_{B(0,r)}|Df(x)|^n\right)^{p/n}dxdr\right)^{n/p}\left(\int_{0}^{1}(1-r)^{n-2+\alpha}dr\right)^{(p-n)/p}.
\end{eqnarray*}
Apply the equivalence of \eqref{p-maximal-modulus-estimate} and \eqref{p-area-estimate} and $\int_{0}^{1}(1-r)^{n-2+\alpha}dr<\infty$ for $-1<\alpha<0.$ Then we have shown that $(\uppercase\expandafter{\romannumeral2})<\infty.$ Therefore, we have finished the proof of Theorem \ref{for all p}.
\end{proof}

\vskip .2cm

{\small
\noindent Addresses:\\

\noindent Sita Benedict: Department of Matematics, University of Hawaii, Honolulu, HI 96822, USA

\noindent E-mail:{\tt mawasigirl@gmail.com}

\vskip .2cm

\noindent Pekka Koskela: Department of Mathematics, University of Jyv\"askyl\"a, P.O. Box 35, FIN-40351 Jyv\"askyl\"a, Finland 

\noindent E-mail: {\tt pkoskela@math.jyu.fi}

\vskip .2cm

\noindent Xining Li: School of Mathematics, Sun Yat-Sen University, Guangzhou, China 510275

\noindent E-mail: {\tt lixining3@mail.sysu.edu.cn}
}


\begin{thebibliography}{99}
\bibitem{AG1}
K.\ Astala, F.\ Gehring,
{\it Injectivity, the BMO-norm and the universal Teichm\"uller space}, J. Analyse Math. {\bf 46} (1986), 16-57.
\bibitem{AG2}
K.\ Astala, F.\ Gehring,
{\it Quasiconformal analogues of theorems of Koebe and
Hardy-Littlewood,} Mich. Math. J. {\bf 32} (1985), 99-107.


\bibitem{AK}
K.\ Astala, and P.\ Koskela,
{\it $H^p $ theory for quasiconformal mappings}, Pure Appl. Math. Q. {\bf 7} (2011), no. 1, 19-50.

\bibitem{BGP}
A.\ Baernstein, D.\ Girela and J.\ Pel\'aez,{\it Univalent functions, Hardy spaces and spaces of Dirichlet type}, Illinois J. Math. {\bf 48} (2004), no. 3, 837-859.

\bibitem{Be}
S.\ Benedict,
{\it Hardy-Orlicz Spaces of Conformal Densities}, Conform. Geom. Dyn. {\bf 19} (2015), 146-158.

\bibitem{GPaPe}
D.\ Girela, M.\ Pavlovi\'c and J.\ A.\ Pel\'aez, {\it Spaces of analytic functions of Hardy-Bloch type}, J. Anal. Math. {\bf 100} (2006), 53-31.
\bibitem{Jo}
P.\ Jones,
{\it Extension Theorems for BMO}, Indiana Math. J. {\bf 29} (1979), 41- 66.
\bibitem{K1}
P.\ Koskela,
{\it An inverse Sobolev Lemma,} Rev. Mat. Iberoamericana {\bf 10} (1994),123-141.

%

\bibitem{KB}
P.\ Koskela, and S.\ Benedict,
{\it Intrinsic Hardy-Orlicz spaces of conformal mappings.} Bull. Lond. Math. Soc {\bf 47} (2015), no.1, 75-84.
\bibitem{PR}
F.\ P\'erez-Gonz\'alez and J.\ R\"atty\"a,
{Univalent Functions in Hardy, Bergman, Bloch and Related Spaces.}
J. Anal. Math. {\bf 105} (2008), 125-148.

\bibitem{V}
J.\ V\"ais\"al\"a,
{\it Lectures on n-dimensional quasiconformal mappings,} Lecture Notes Mathematics, Springer Verlage, vol. {\bf 229}, 1971.
\bibitem{Z}
M.\ Zinsmeister,
{\it A distortion theorem for quasiconformal mappings}, Bull. Soc. Math France {\bf 114} (1986), 123-133.
\end{thebibliography}
\end{document}